\documentclass[12pt,a4paper,reqno,twoside]{amsart}

\usepackage[english]{babel}
\usepackage{stmaryrd}
\usepackage{dsfont}
\usepackage[symbol*,ragged]{footmisc}
\usepackage[colorlinks,linkcolor=red,anchorcolor=blue,citecolor=blue,urlcolor=blue]{hyperref}
\usepackage{color,xcolor}

\usepackage{geometry}
\usepackage{amssymb}
\usepackage{amsmath}
\usepackage{mathrsfs}
\usepackage{amsfonts}
\usepackage{epsfig}

\usepackage{amsthm}
\usepackage{amsxtra}
\usepackage{bbding}
\usepackage{epsfig}
\usepackage{graphicx}
\usepackage{latexsym}
\usepackage{mathbbol}
\usepackage{bbold}

\usepackage{pifont}
\usepackage{wasysym}
\usepackage{skull}
\usepackage{float}

\DeclareSymbolFontAlphabet{\mathbb}{AMSb}
\DeclareSymbolFontAlphabet{\mathbbol}{bbold}

\usepackage{amscd}
\usepackage[all]{xy}
\allowdisplaybreaks[4]
\usepackage{setspace}

\theoremstyle{plain}
\newtheorem{theorem}{\normalfont\scshape Theorem}[section]
\newtheorem{proposition}{\normalfont\scshape Proposition}[section]
\newtheorem{lemma}[proposition]{\normalfont\scshape Lemma}

\newtheorem*{corollary*}{\normalfont\scshape Corollary}

\theoremstyle{remark}
\newtheorem*{remark*}{\normalfont\scshape Remark}
\newtheorem*{notation}{\normalfont\scshape Notation}

\numberwithin{equation}{section}
\addtocounter{footnote}{1}

\renewcommand{\footnoterule}{
	\kern -3pt
	\hrule width 2.5in height 0.4pt
	\kern 3pt
}

\makeatletter
\@ifundefined{MakeUppercase}{}{}
\makeatother

\begin{document}
\title[On Piatetski--Shapiro primes from almost primes]
	  {On Piatetski--Shapiro primes from almost primes}
	
\author[Yuhua Zhao, Jinjiang Li, Linji Long, Min Zhang]
       {Yuhua Zhao \quad \& \quad Jinjiang Li \quad \& Linji Long \quad \& Min Zhang}
	
\address{[Yuhua Zhao] Department of Mathematics, China University of Mining and Technology,
		Beijing 100083, People's Republic of China}
	
\email{\textcolor{black}{yuhua.zhao.math@gmail.com}}

\address{[Jinjiang Li] (Corresponding author) Department of Mathematics, China University of Mining
        and Technology, Beijing 100083, People's Republic of China}
	
\email{\textcolor{black}{jinjiang.li.math@gmail.com}}

\address{[Linji Long] Department of Mathematics, China University of Mining and Technology,
		Beijing 100083, People's Republic of China}
	
\email{\textcolor{black}{linji.long.math@gmail.com}}

\address{[Min Zhang] School of Applied Science, Beijing Information Science and Technology University,
		  Beijing 100192, People's Republic of China  }
	
\email{\textcolor{black}{min.zhang.math@gmail.com}}
	
\date{}
	
\footnotetext[1]{Jinjiang Li is the corresponding author.    \\
\quad\,\,{\textbf{Keywords}}: Piatetski--Shapiro primes; almost--prime; sieve method; exponential sum estimete\\
\quad\,\,{\textbf{MR(2020) Subject Classification}}: 11N05, 11N36, 11L07, 11L20

}

\begin{abstract}
Denote by $\mathcal{P}_r$ an almost--prime with at most $r$ prime factors, counted according to multiplicity. In this manuscript, it is established that, for any fixed $0.98353<\gamma<1$, there exist infinitely many primes of the form $p=[n^{1/\gamma}]$, where $n$ is an almost--prime $\mathcal{P}_7$. This result constitutes an improvement upon the previous result of Baker, Banks, Guo and Yeager \cite{Baker-Banks-Guo-Yeager-2014}, who showed that there exist infinitely many primes $p$ such that $p=[n^{1/\gamma}]$ with $n\in\mathcal{P}_8$ for $\gamma$ near to one.
\end{abstract}
	
\maketitle

\section{Introduction and main result}
Suppose that $\gamma\in(\frac{1}{2},1)$ is a fixed real number. The Piatetski--Shapiro sequences is defined by
\begin{equation*}
\mathscr{N}_{\gamma}:=\big\{[ n^{1/\gamma}]:\,n\in \mathbb{N}^+\big\}.
\end{equation*}
Such sequences have been named in honor of Piatetski--Shapiro, who \cite{Piatetski-Shapiro-1953}, in 1953, proved that $\mathscr{N}_{\gamma}$ contains infinitely many primes provided that $\gamma\in(\frac{11}{12},1)$. The prime numbers of the form $p=[ n^{1/\gamma}]$ are called \textit{Piatetski--Shapiro primes of type $\gamma$}. To be specific, in this notation Piatetski--Shapiro \cite{Piatetski-Shapiro-1953} illustrated that the associated counting function
\begin{equation*}
\pi_\gamma(x):=\#\big\{\textrm{prime}\,\, p\leqslant x:\,p=[ n^{1/\gamma}]\,\,\textrm{for some}\,\,
n\in\mathbb{N}^+ \big\}
\end{equation*}
satisfies the asymptotic property
\begin{equation}\label{pi-gamma-PNT}
\pi_{\gamma}(x)=\frac{x^{\gamma}}{\log x}(1+o(1))
\end{equation}
as $x\to\infty$. Since then, subsequent work, i.e. $\mathscr{N}_{\gamma}$ contains infinitely many primes,
repeatedly extended the admissible range of $\gamma$ for above asymptotic formula (e.g., see the literatures \cite{Kolesnik-1967,Leitmann-1975,Leitmann-1980,Heath-Brown-1983,Kolesnik-1985,Liu-Rivat-1992, Rivat-1992, Rivat-Sargos-2001}). The hitherto best asymptotic result currently available in this direction is due to Rivat and Sargos \cite{Rivat-Sargos-2001}, who reached $\gamma\in(\frac{2426}{2817},1)$. Moreover, a lower bound of $\pi_\gamma(x)$ with the expected order of magnitude was obtained by Rivat and Wu \cite{Rivat-Wu-2001} for $\gamma\in(\frac{205}{243},1)$. These developments provide the analytic background for the inverse
problem considered herein.

Many authors investigated the arithmetic properties of Piatetski--Shapiro sequences, and it is natural to ask whether certain properties also hold when we constrain the inner variable $n$ to special subsequences. For an analogue of (\ref{pi-gamma-PNT}), one can consider the subset of $\mathscr{N}_{\gamma}$ and expect the counting function
\begin{equation*}
\Pi_\gamma(x):=\#\big\{\textrm{prime}\,\, p\leqslant x:\,p=[ q^{1/\gamma}]\,\,
\textrm{for some prime number $q$}\big\}
\end{equation*}
satisfies the asymptotic property
\begin{equation}\label{eq1_2}
\Pi_\gamma(x)=\frac{x^\gamma}{(\log x)^2}(1+o(1)) \qquad (x\to \infty).
\end{equation}
Even when $\gamma>1$, this problem is highly nontrivial. It asks for primes represented by a linear expression in prime variable, a problem closely connected with the twin prime conjecture. Balog \cite{Balog-1987}  proved (\ref{eq1_2}) for $\gamma>6/5$ in 1987.

Let $\mathcal{P}_r$ denote an almost--prime with at most $r$ prime factors, counted according to multiplicity. Since (\ref{eq1_2}) remains intractable for $\gamma < 1$, a natural approximation is to relax the primality condition to almost--primes. In 1987, Balog \cite{Balog-1987} studied this problem in two symmetric directions, showing that the lower bound of expected order of magnitude
\begin{equation*}
\sum_{\substack{\mathcal{P}_s \leqslant x \\ [\mathcal{P}_r^{1/\gamma}] = \mathcal{P}_s}} 1 \gg \frac{x^\gamma}{\log^2 x}
\end{equation*}
holds for $(r,s) \in \{(1, 9), (9, 1), (2, 5), (5, 2)\}$. Since then, considerable efforts have been devoted to improving the parameters $r$ and $s$.
	
The first direction focuses on Piatetski--Shapiro sequences evaluated at primes, i.e., $r=1$. In 2016, Banks,
Guo and Shparlinski \cite{Banks-Guo-Shparlinski-2016} improved Balog's result by showing that there exist infinitely many primes $p$ such that $[p^{1/\gamma}]=\mathcal{P}_8$ provided that $\gamma \in (0.950479, 1)$. Very recently, Xue, Li and Zhang \cite{Xue-Li-Zhang-2024} further enhanced this result to $[p^{1/\gamma}]=\mathcal{P}_7$ for $\gamma\in(0.989,1)$. To be specific, they showed that the following estimates
\begin{equation*}
\big|\{[p^{1/\gamma}]\leqslant x:p\textrm{\,\,is prime},\,[p^{1 / \gamma}]=\mathcal{P}_7\} \big| \gg \frac{x^\gamma}{\log ^2 x}
\end{equation*}	
holds for all sufficiently large $x$ provided that $\gamma \in(\gamma_7, 1)$ with $\gamma_7=0.989$.
The second direction focuses on exploring the inverse problem of capturing primes within Piatetski--Shapiro sequences evaluated at almost--primes, i.e., $s=1$. In 2014, Baker, Banks, Guo and Yeager \cite{Baker-Banks-Guo-Yeager-2014} improved the result $(r,s)=(9,1)$ of Balog \cite{Balog-1987} to $(r,s)=(8,1)$. More exactly, they showed that
\begin{equation*}
\big|\{n \leqslant x : n \in \mathcal{P}_8 \textrm{ and } [n^{1/\gamma}]
\textrm{ is prime}\}\big| \gg \frac{x}{(\log x)^2},
\end{equation*}
provided that $76/77 < \gamma < 1$.

Motivated by the recent result of Xue, Li and Zhang \cite{Xue-Li-Zhang-2024}  in the first direction, by refining the weighted sieve of Richert \cite{Richert-1969} and exponential sum estimates, we shall improve the result of Baker, Banks, Guo and Yeager \cite{Baker-Banks-Guo-Yeager-2014} and establish the following theorem.
\begin{theorem}\label{Theorem-1}
For any fixed $0.98353<\gamma<1$, there exist infinitely many primes of the form $p=[n^{1/\gamma}]$, where $n$
is an almost--prime $\mathcal{P}_7$.  More precisely, there holds
\begin{equation*}
\big|\{n\leqslant x:n\in\mathcal{P}_7\,\,\textrm{and}\,\,[n^{1/\gamma}]=p\,\,
\textrm{is a prime}\}\big|\gg\frac{x}{(\log x)^2},
\end{equation*}
provided that $0.98353<\gamma<1$, where the implied constant in the symbol $\gg$ depends only on $\gamma$.
\end{theorem}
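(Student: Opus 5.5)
The plan is to follow Baker--Banks--Guo--Yeager, using a sharper weighted sieve in the style of Richert. First reformulate the condition. For $n\leqslant x$, $[n^{1/\gamma}]=p$ means $p\leqslant n^{1/\gamma}<p+1$, i.e.\ $p^\gamma\leqslant n<(p+1)^\gamma$. This interval has length $\approx\gamma p^{\gamma-1}<1$, so it contains at most one integer. Hence the quantity to be bounded below equals
\begin{equation*}
\sum_{p\leqslant x^{1/\gamma}}\sum_{\substack{n\in\mathcal{P}_7\\ p^\gamma\leqslant n<(p+1)^\gamma}}1
=\sum_{p\leqslant x^{1/\gamma}}\big([-p^\gamma]-[-(p+1)^\gamma]\big)\mathbf{1}_{\mathcal{P}_7}(\cdot).
\end{equation*}
Accordingly, set $\mathcal{A}=\{n\leqslant x: n=\lceil p^\gamma\rceil \text{ with } \lceil p^\gamma\rceil<(p+1)^\gamma\}$, with each $n$ weighted by $\log p$. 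The task is then to sieve $\mathcal{A}$ to produce $\mathcal{P}_7$'s. The basic input is the level of distribution. For squarefree $d\leqslant D=x^{\theta}$, write $\psi(t)=t-[t]-1/2$. Then
\begin{equation*}
|\mathcal{A}_d|=\sum_{p}\log p\sum_{\substack{m:\, p^\gamma\leqslant dm<(p+1)^\gamma}}1
=\frac{1}{d}\sum_{p}\log p\,\big((p+1)^\gamma-p^\gamma\big)+R_d,
\end{equation*}
where $R_d$ is a difference of sums $\sum_p\log p\,\psi(-(p+\epsilon)^\gamma/d)$ with $\epsilon\in\{0,1\}$. The main term is $\frac{1}{d}\gamma x+\ldots$, so the sieve density is $\omega(d)/d$ with $\omega\equiv1$ (dimension one).

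The key analytic step is to prove that
\begin{equation*}
\sum_{d\leqslant D}\lambda_d R_d\ll x(\log x)^{-A}
\end{equation*}
for well-factorable (Iwaniec linear sieve) weights $\lambda_d$ with $D=x^{\theta(\gamma)-\varepsilon}$, where $\theta(\gamma)$ is as large as possible. Expanding $\psi$ in Vaaler's trigonometric polynomial reduces this to bilinear forms
\begin{equation*}
\sum_{d\sim D}\lambda_d\sum_{1\leqslant h\leqslant H}c_h\sum_{p\sim P}\log p\; e\big(h(p+\epsilon)^\gamma/d\big),
\end{equation*}
with $H\approx D x^{-\gamma}\cdot x^{1-\gamma+\varepsilon}$; more precisely $H\approx d\,p^{1-\gamma}x^{\varepsilon}$, because the relevant scale is $1/(\text{length }p^{\gamma-1})$. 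The derivative factor $(p+1)^\gamma-p^\gamma$ should be handled by writing the difference as $\gamma\int_0^1(p+u)^{\gamma-1}du$ before expanding, as in the earlier work. I will decompose $\Lambda$ over primes with Heath-Brown's identity into Type I and Type II sums. The Type I sums will be treated with van der Corput / exponent pairs, using the well-factorable structure $\lambda=\alpha*\beta$ to group $d$ with the $p$-variables. The Type II sums will be treated with Cauchy--Schwarz, Weyl differencing, and a double large sieve or a Bombieri--Iwaniec/Robert--Sargos type bound for multiple sums with monomials $h^{a}d^{-1}m^{\gamma}n^{\gamma}$. The outcome should be an admissible level $\theta$ depending linearly on $1-\gamma$, roughly $\theta=c_0-c_1(1-\gamma)$, with $\theta$ close to $1/7$-ish range. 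I expect this step to be the main obstacle: the threshold $0.98353$ will be exactly where the exponent sum bounds give just enough level.

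Then apply Richert's weighted sieve with logarithmic weights:
\begin{equation*}
W=\sum_{\substack{n\in\mathcal{A}\\(n,P(z))=1}}\Big(1-\sum_{\substack{z\leqslant q<y\\ q\mid n}}\frac{1}{\lambda}\Big(1-\frac{\log q}{\log y}\Big)\Big),
\end{equation*}
with $z=x^{\alpha}$ and $y=x^{\beta}$. Discard the terms where $n$ has a square factor $q^2$ with $q\geqslant z$; these are $O(x^{1-\alpha+\varepsilon})$ since at most one $n$ lies in each $p$-interval. If $W>0$ for some $n$, the weight shows that $n$ has at most $r$ prime factors provided $r>\log x/\log y$-type inequalities hold, namely $\lambda r$ against $(1/\beta)$ combined with the weight deficit. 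Estimating $W$ from below via the linear sieve functions $f,F$ (Jurkat--Richert/Iwaniec), with the level $D$ from the previous step, gives
\begin{equation*}
W\geqslant\frac{x}{\log x}\Big(f(\theta/\alpha)-\frac1\lambda\int_\alpha^\beta F\big((\theta-u)/u\big)\Big(1-\frac u\beta\Big)\frac{du}{u}-\varepsilon\Big)\prod_{q<z}(1-1/q).
\end{equation*}
The final step is numerical. I will choose $\alpha,\beta,\lambda$ satisfying Richert's admissibility condition for $r=7$, for instance $\beta=\theta$-related and $\lambda$ near the boundary, and verify that the bracket is positive when $\theta=\theta(\gamma)$ with $\gamma>0.98353$. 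Since $\prod_{q<z}(1-1/q)\asymp1/\log x$, this yields the bound $\gg x/(\log x)^2$. If needed, I will refine with the switching/Buchstab terms of Xue--Li--Zhang to gain the last decimals.
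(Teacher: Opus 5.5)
There is a genuine gap. It is in the sieve step, not in the exponential sums. With level $D=x^{\xi}$ for integers $n\leqslant x$, Richert's weighted sieve yields $\mathcal{P}_r$ only for $r$ larger than about $1/\xi-1+\log 4/\log 3\approx 1/\xi+0.26$. Even sharper weights of Greaves type still need $r$ larger than $1/\xi$ minus a constant well below $1/2$.

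The analytic machinery you sketch is broadly what the paper uses: Heath-Brown's identity, Type I sums via Robert--Sargos, and Type II sums via Cauchy--Schwarz, lattice-point counting and exponent pairs. It gives
$\xi(\gamma)=\min\{\frac{25857\gamma-23041}{18432\gamma},\frac{357\gamma-314}{282\gamma}\}$.
This is about $0.132$ at $\gamma=0.98353$, so $1/\xi\approx 7.58$, and it only rises to about $0.1525$ as $\gamma\to 1$. Even your own guess $\theta\approx 1/7$ gives $r>7.26$, i.e.\ $\mathcal{P}_8$, which is exactly the Baker--Banks--Guo--Yeager result.

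So the threshold $0.98353$ is \emph{not} where the exponent-sum level becomes large enough for a direct weighted sieve. A direct Richert sieve would need $\xi$ near $0.148$, which this method reaches only for $\gamma$ very close to $1$. Using well-factorable weights instead of $\sum_d|R_d|$ does not obviously close a gap of this size, and you give no estimate that would.

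The missing idea is Chen's switching principle. You treat it as an optional refinement ``for the last decimals'', but it is the heart of the proof.

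The paper takes $u=1/\xi$ and $\lambda=(9-u)^{-1}$, so the weights only force $\mathscr{W}_a<0$ when $\Omega(a)\geqslant 9$. It then subtracts $\lambda$ times the number of $a=p_1\cdots p_8\leqslant x$ with all $p_i\geqslant x^{1/26.18}$ and $[a^{1/\gamma}]$ prime. Counting these inside $\mathscr{A}$ is out of reach at level $x^{\xi}$, since for instance $p_1\cdots p_7\geqslant x^{0.26}$ already exceeds the level. So the roles are reversed: one sifts $\{[m^{1/\gamma}]: m\in\mathscr{B}\}$ up to $x^{1/(2\gamma)}$ with the upper-bound linear sieve, which gives the factor $F(2\gamma\xi)=e^{C_0}/(\gamma\xi)$.

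This requires a second, independent level-of-distribution result:
$\sum_{d\leqslant x^{\xi}}\bigl|\sum_{m\in\mathscr{B}}\bigl(\psi((m^{1/\gamma}-1)/d)-\psi(m^{1/\gamma}/d)\bigr)\bigr|\ll x(\log x)^{-A}$.
The paper proves it with a Type II estimate for phases $hm^{1/\gamma}/d$, valid whenever $m$ has a sub-product in $[X^{0.61},X^{0.886}]$. It adds a combinatorial check that every product of eight primes $\geqslant x^{1/26.18}$ has such a sub-product.

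The threshold $0.98353$ is where the final numerical bracket stays positive, and that bracket includes this subtracted $\Omega(a)=8$ term. Without the switching step, or a substantially stronger level of distribution that you do not supply, your plan proves $\mathcal{P}_8$, not $\mathcal{P}_7$, on most of the stated range of $\gamma$.
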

\begin{remark*}
The key point of improving the number $r$ such that $[\mathcal{P}_r^{1/\gamma}]=p$ is to enlarge the level $\xi=\xi(\gamma)$, for $\gamma$ near to 1, of the mean value theorem of Bombieri--Vinigradov's type over Piatetski--Shapiro sequence. In order to use weighted sieve to deal with the problem, we treat the error term according to the idea of
Richert \cite{Richert-1969} and the method of Chen \cite{Chen-1973}.
\end{remark*}
\begin{notation}
Throughout this paper, the parameter $x$ is always sufficiently large, whereas $\varepsilon$ and $\eta$ denote small positive quantities whose values may change from line to line. The letters $p$, with or without subscripts,
are reserved for primes. We use $[x],\{x\}$ and $\|x\|$ for integral part of $x$, the fractional part of $x$ and the distance from $x$ to the nearest integer, respectively. The symbol $\mathcal{P}_r$ denotes an integer having at most $r$ prime factors, counted with multiplicity. The functions $\Lambda(n)$, $\mu(n)$ and $\Omega(n)$ have their customary meanings: von Mangoldt's function, M\"{o}bius' function, and the number of total prime factors of $n$, respectively. We write $\mathscr{L}=\log x$; $e(t)=\exp (2 \pi i t)$; $\psi(t)=t-[t]-\frac{1}{2}$.
The notation $n \sim X$ means that $n$ runs through a subinterval of $(X/2, X]$, whose endpoints are not necessarily the same in the different occurrences and may depend on the outer summation variables. $f(x) \ll g(x)$ means that $f(x)=O(g(x))$; $f(x) \asymp g(x)$ means that $f(x) \ll g(x) \ll f(x)$.
\end{notation}

\section{Preliminary Lemmas}
We recall the one--dimensional sieve framework needed later. Let $\mathscr{A}$ be a finite set of integers, and $\mathscr{P}$ an infinite set of primes with its complementary prime set denoted by $\overline{\mathscr{P}}$.
For given $z\geqslant2$, define $P(z)$ by the product below
\begin{equation*}
P(z)=\prod_{\substack{p<z\\ p\in\mathscr{P}}} p.
\end{equation*}
Define the sifting function as
\begin{equation*}
S(\mathscr{A},\mathscr{P},z)=\big|\big\{a\in\mathscr{A}:(a,P(z))=1\big\}\big|.
\end{equation*}
For every divisor $d$ subject to $d|P(z)$, let $\mathscr{A}_d$ consist of those elements of $\mathscr{A}$
divisible by $d$. We assume that its cardinality $|\mathscr{A}_d|$ admits the decomposition
\begin{equation}\label{sieve-condi-1}
|\mathscr{A}_d|=\frac{\omega(d)}{d}X+r_d,\qquad \mu(d) \neq 0,\qquad (d,\overline{\mathscr{P}})=1,
\end{equation}
where $\omega(d)$ is a multiplicative function subject to $0\leqslant\omega(p)<p$, the scale $X$ does not depend
on $d$, while $r_d$ is the error term whose average is small enough for $X$ to approximate the cardinality of $\mathscr{A}$. Also, we further impose the standard one dimension regularity condition on average over $p$ in $\mathscr{P}$ for $\omega(p)$, i.e.,
\begin{equation}\label{sieve-condi-2}
\sum_{\substack{z_1\leqslant p<z_2\\ p\in\mathscr{P}}}\bigg(1-\frac{\omega(p)}{p}\bigg)^{-1}
\leqslant\frac{\log z_2}{\log z_1}\bigg(1+\frac{\mathcal{K}}{\log z_1}\bigg),
\end{equation}
which holds for all $z_2>z_1\geqslant2$, where $\mathcal{K}$ is a constant satisfying $\mathcal{K}\geqslant1$. For details of (\ref{sieve-condi-1}) and (\ref{sieve-condi-2}), one can see the arguments (4.12)--(4.15) on page 28 of Halberstam and Richert \cite{Halberstam-Richert-book}, and the arguments on page 205 of Iwaniec \cite{Iwaniec-1981}.
	\begin{lemma}\label{upper-lower-sieve}
		Suppose that the conditions (\ref{sieve-condi-1}) and (\ref{sieve-condi-2}) hold. Then we have
		\begin{align}
			S(\mathscr{A},\mathscr{P},z) \geqslant & \,\, XV(z)\big(f(s)+O\big(\log^{-1/3}D\big)\big)-R_D, \label{lower-sieve}
					\\
			S(\mathscr{A},\mathscr{P},z) \leqslant & \,\, XV(z)\big(F(s)+O\big(\log^{-1/3}D\big)\big)+R_D, \label{upper-sieve}
		\end{align}
where
\begin{equation*}
R_D=\sum_{\substack{d<D\\ d|P(z)}}|r_d|,\qquad s=\frac{\log D}{\log z},
\end{equation*}
\begin{equation}\label{V(z)-def}
V(z)=\mathcal{C}(\omega)\frac{e^{-C_0}}{\log z}\bigg(1+O\bigg(\frac{1}{\log z}\bigg)\bigg),
\end{equation}
\begin{equation}\label{C(omega)-def}
\mathcal{C}(\omega)=\prod_p\bigg(1-\frac{\omega(p)}{p}\bigg)\bigg(1-\frac{1}{p}\bigg)^{-1},
\end{equation}
where $C_0$ denotes the Euler's constant, $f(s)$ and $F(s)$ denote the lower and upper linear sieve functions, which are determined by the following differential--difference equation
\begin{equation}
\begin{cases}\label{diff-eq}
	\displaystyle F(s)=\frac{2e^{C_0}}{s},\quad f(s)=0, \quad 0<s\leqslant2,
						\\
\displaystyle
\frac{\mathrm{d}}{\mathrm{d}s}(sF(s))=f(s-1),\quad \frac{\mathrm{d}}{\mathrm{d}s}(sf(s))=F(s-1),\quad s\geqslant2.
\end{cases}
\end{equation}
\end{lemma}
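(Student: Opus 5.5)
This is the classical Jurkat--Richert/Iwaniec form of the linear sieve (Theorem 8.3 of Halberstam--Richert, or Iwaniec's 1981 theorem). The plan is to quote that result rather than re-prove it, and to verify that the hypotheses and normalisations match. First, I would check that conditions (\ref{sieve-condi-1}) and (\ref{sieve-condi-2}) are exactly Halberstam--Richert's $(\Omega_1)$ and $(\Omega_2(\kappa))$ with $\kappa=1$. The remainder condition $(R)$ is not needed, because $R_D$ is kept explicitly on the right-hand side. Under these hypotheses, the Jurkat--Richert theorem gives, for $z\leqslant D$,
\begin{equation*}
S(\mathscr{A},\mathscr{P},z)\geqslant XV(z)\Big(f(s)-O\big((\log D)^{-1/3}\big)\Big)-\sum_{\substack{d<D\\ d\mid P(z)}}|r_d|,
\end{equation*}
together with the analogous upper bound involving $F(s)$, where $V(z)=\prod_{p<z,\,p\in\mathscr{P}}(1-\omega(p)/p)$. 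In Iwaniec's version, the error factor $O(\log^{-1/3}D)$ arises from the bilinear/iterative construction of the beta-sieve weights $\lambda_d^{\pm}$, which are supported on $d<D$ and $d\mid P(z)$. The case $z>D$, that is $s<1$, is trivial: there $f(s)=0$, and since $F(s)=2e^{C_0}/s$ is large, the upper bound follows by monotonicity from the case $z=D$.

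The sieve functions need no separate construction. The system (\ref{diff-eq}) with initial data $sF(s)=2e^{C_0}$ and $f(s)=0$ on $0<s\leqslant2$ is the standard characterisation of the linear-sieve functions. The unique continuous solutions satisfy $F(s)=1+O(e^{-s})$ and $f(s)=1+O(e^{-s})$, which is consistent with the normalisation used in the cited theorems.

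The remaining step is to pass from $V(z)$ as a product to the asymptotic form (\ref{V(z)-def}). I would use Mertens' theorem, $\prod_{p<z}(1-1/p)=e^{-C_0}(\log z)^{-1}(1+O(1/\log z))$, and write
\begin{equation*}
V(z)=\prod_{p<z}\Big(1-\frac1p\Big)\prod_{p<z}\Big(1-\frac{\omega(p)}{p}\Big)\Big(1-\frac1p\Big)^{-1},
\end{equation*}
with $\omega(p)=0$ for $p\in\overline{\mathscr{P}}$. The second product converges to $\mathcal{C}(\omega)$ with tail $1+O(1/\log z)$. This is the main obstacle: the tail bound is not implied by (\ref{sieve-condi-2}) alone, which gives only $\prod_{z_1\leqslant p<z_2}$ comparable to $\log z_2/\log z_1$ up to a factor $1+O(\mathcal{K}/\log z_1)$. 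What is needed is the regularity $\omega(p)=1+O(\text{small})$ on average, or a bounded $\omega(p)$ with $\sum_p(\omega(p)-1)/p$ convergent at rate $1/\log z$. In the applications here one has $\omega(p)=1$, or $\omega(p)=p/(p-1)$ for $p\nmid$ some fixed modulus, and for such $\omega$ the product $\prod_p\big(1+O(p^{-2})\big)$ has tail $O(1/z)$. I would therefore state that (\ref{V(z)-def}) is used in exactly this setting, absorb the factor $1+O(1/\log z)$ into $O(\log^{-1/3}D)$ (valid since $\log z\gg\log D$ for bounded $s$), and conclude.
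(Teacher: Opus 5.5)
Your proposal is correct and matches the paper's own argument, which is likewise pure citation: Iwaniec's theorem for (\ref{lower-sieve}), (\ref{upper-sieve}) and for (\ref{diff-eq}) with $\varkappa=1$, $\beta=2$, and Halberstam--Richert for (\ref{V(z)-def}). Your two extra points are right: (\ref{V(z)-def}) needs more than the one-sided condition (\ref{sieve-condi-2}) (it holds for $\omega\equiv1$, as in every application here), and the case $s<1$ follows from $z=D$ by monotonicity plus (\ref{sieve-condi-2}), which the paper actually needs since it applies the upper bound with $s=2\gamma\xi<1$; one precision, though: only Iwaniec's version gives exactly the unweighted $R_D$ with error $O(\log^{-1/3}D)$, whereas the Jurkat--Richert theorem in Chapter 8 of Halberstam--Richert puts the weight $3^{\nu(d)}$ on $|r_d|$ (inherited from Selberg's $\Lambda^2$ weights) and has a weaker error term, so it should not be quoted for this form.
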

\begin{proof}
		For (\ref{lower-sieve}) and (\ref{upper-sieve}), one can refer to (6), (7), (8) on page 209 of Iwaniec \cite{Iwaniec-1981}, while (\ref{diff-eq}) can be referred to as a special case with $\varkappa=1$, $\beta=2$ in (9) of Iwaniec \cite{Iwaniec-1981}. Moreover, for (\ref{V(z)-def}) and (\ref{C(omega)-def}) one can see (2.4) and (2.5) of Chapter $5$ in Halberstam and Richert \cite{Halberstam-Richert-book}.
	\end{proof}

	\begin{lemma}\label{psi-expansion}
		For any $H>1$, one has
		\begin{equation}\label{psi-expan}
			\psi(t)=-\sum_{0<|h|\leqslant H}\frac{e(th)}{2\pi i h}+O(g(t,H)),
		\end{equation}
		where
		\begin{equation*}
			g(t,H):=\min\bigg(1,\frac{1}{H\|t\|}\bigg)=\sum_{h=-\infty}^{\infty} b_h e(th),
		\end{equation*}
		and
		\begin{equation*}
			b_h\ll\min\bigg(\frac{\log2H}{H},\frac{1}{|h|},\frac{H}{|h|^2}\bigg).
		\end{equation*}
	\end{lemma}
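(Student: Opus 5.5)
The plan is to obtain this expansion, which is Vaaler's approximation to $\psi$, from a trigonometric polynomial approximation combined with a Fejér-type majorant for the error. Fix $H>1$ and let $J(t)$ be Vaaler's trigonometric polynomial
\begin{equation*}
J(t)=-\sum_{0<|h|\leqslant H}\frac{e(th)}{2\pi i h}\,\Phi\Big(\frac{|h|}{H+1}\Big),
\end{equation*}
where $\Phi(u)=\pi u(1-u)\cot(\pi u)+u$. It satisfies $|\psi(t)-J(t)|\leqslant \frac{1}{2(H+1)}K_H(t)$, where $K_H(t)=\sum_{|h|\leqslant H}\big(1-\frac{|h|}{H+1}\big)e(th)$ is the Fejér kernel. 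Since $1-\Phi(u)\ll u$, replacing $\Phi$ by $1$ changes $J$ by
\begin{equation*}
\sum_{0<|h|\leqslant H}\frac{|1-\Phi(|h|/(H+1))|}{|h|}\,\bigl|e(th)\bigr|\ll 1,
\end{equation*}
which is too crude as it stands. So I will work with the truncated sum directly and estimate $\psi(t)+\sum_{0<|h|\leqslant H}\frac{e(th)}{2\pi i h}$ pointwise.

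\textbf{Pointwise bound.} The standard Fourier estimate gives
\begin{equation*}
\psi(t)+\sum_{0<|h|\leqslant H}\frac{e(th)}{2\pi i h}\ll \min\Big(1,\frac{1}{H\|t\|}\Big).
\end{equation*}
To see this, write the tail $-\sum_{|h|>H}\frac{\sin(2\pi h t)}{\pi h}$, which is valid for $t\notin\mathbb Z$. Partial summation with $\bigl|\sum_{H<h\leqslant M}\sin(2\pi h t)\bigr|\ll \|t\|^{-1}$ bounds it by $\ll 1/(H\|t\|)$. The bound $\ll 1$ also holds uniformly, because the partial sums of the Fourier series of $\psi$ are uniformly bounded (a classical fact equivalent to boundedness of $\int_0^x\frac{\sin u}{u}\,du$). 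At integers the left side is bounded, and $g=1$ there. This gives $O(g(t,H))$.

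\textbf{Fourier coefficients of $g$.} The function $g(\cdot,H)$ is even, $1$-periodic, of bounded variation and continuous, so it has an absolutely convergent Fourier expansion $\sum_h b_h e(th)$ with $b_h=\int_0^1 g(t,H)e(-ht)\,dt$. Three estimates are needed.
\begin{itemize}
\item[(i)] Trivially $|b_h|\leqslant\int_0^1 g\ll \frac1H+\int_{1/H}^{1/2}\frac{dt}{Ht}\ll\frac{\log 2H}{H}$.
\item[(ii)] One integration by parts gives $|b_h|\leqslant\frac{1}{2\pi|h|}\mathrm{Var}(g)\ll\frac{1}{|h|}$, since $g$ is monotone on each half-period with range in $[0,1]$.
\item[(iii)] Integrating by parts twice, $g'$ is zero on $\|t\|<1/H$ and equals $\mp 1/(Ht^2)$ beyond it. Hence $b_h\ll |h|^{-2}\big(|g'(1/H)|+\int_{1/H}^{1/2}|g''|\big)\ll |h|^{-2}\cdot H$, where the jump of $g'$ at $\|t\|=1/H$ has size $H$ and $\int_{1/H}|g''|\ll H$. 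The bound at $t=1/2$ is harmless because $g'$ is continuous there up to sign symmetry, and $H/h^2\geqslant 1/|h|$ is trivial anyway when $|h|\leqslant H$.
\end{itemize}
Taking the minimum of (i)--(iii) gives the stated bound on $b_h$.

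\textbf{Main obstacle.} The delicate point is the uniform boundedness of the truncated Fourier series of $\psi$ near integers. I would handle it by the classical comparison of $\sum_{h\leqslant H}\frac{\sin 2\pi ht}{\pi h}$ with $\frac1\pi\int_0^{2\pi Ht}\frac{\sin u}{u}\,du$. Alternatively, one can cite Vaaler's inequality, as in Heath-Brown \cite{Heath-Brown-1983}, which yields the whole lemma at once.
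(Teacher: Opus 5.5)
Your argument is correct, but it does not follow the paper. The paper gives no proof here: it quotes both the expansion and the bound on $b_h$ from p.~245 of Heath-Brown, where they appear as a standard fact.

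You instead prove the lemma from scratch:
\begin{itemize}
\item The identity $\psi(t)+\sum_{0<|h|\leqslant H}\frac{e(th)}{2\pi i h}=-\sum_{h>H}\frac{\sin(2\pi ht)}{\pi h}$, combined with Abel summation, gives the $1/(H\|t\|)$ bound.
\item Uniform boundedness of the partial sums of the sine series, via the sine integral, gives the $O(1)$ bound near integers.
\item The three coefficient bounds come from, respectively, the $L^1$ norm of $g$, its total variation (at most $2$), and a second integration by parts. In that second step $g'$ jumps by $H$ at $\|t\|=\pm 1/H$ and $\int|g''|\leqslant H$.
\end{itemize}
All of these steps check out, and they work for any real $H>1$. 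Your version buys self-containedness; the citation buys brevity.

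A few small points to clean up:
\begin{itemize}
\item The opening Vaaler paragraph is a dead end and can be deleted. Vaaler's inequality has smoothed coefficients and a Fej\'er-kernel error, so it gives this sharp-cutoff form only after an extra partial summation, not ``at once''. The statement quoted from Heath-Brown is exactly the sharp-cutoff one.
\item At $t=1/2$ the derivative $g'$ is not continuous: it jumps by $8/H$. This is harmless, since $8/H\ll H$.
\item For $1<H<2$ one has $g\equiv 1$, so the coefficient claim is trivial in that range.
\item Absolute convergence of $\sum_h b_h e(th)$ should be justified by your bound (iii), or by $g$ being Lipschitz. Continuity plus bounded variation alone does not give it.
\end{itemize}
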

	\begin{proof}
		See the arguments on page 245 of Heath--Brown \cite{Heath-Brown-1983}.
	\end{proof}

	\begin{lemma}\label{expo-pair-gernal}
		Suppose that $f(x):[a,b]\to\mathbb{R}$ has continuous derivatives of arbitrary order on $[a,b]$, where $1\leqslant a<b\leqslant 2a$. Suppose further that
		\begin{equation*}
			\big|f^{(j)}(x)\big| \asymp \lambda_1 a^{1-j},
			\qquad j \geqslant 1,
			\qquad x \in [a,b].
		\end{equation*}
		Then for any exponential pair $(\kappa,\ell)$, we have
		\begin{equation*}
			\sum_{a<n\leqslant b}e(f(n))\ll \lambda_1^\kappa a^\ell+\lambda_1^{-1}.
		\end{equation*}
	\end{lemma}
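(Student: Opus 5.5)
The statement is the classical exponent-pair estimate (it appears, e.g., as Lemma 3.6 in Graham--Kolesnik). I would prove it from the definition of an exponent pair. I take the definition in the form used by Graham and Kolesnik. A pair $(\kappa,\ell)$ is an exponent pair if, for every $N\geqslant1$, every $F>0$ and every $N<N_1\leqslant 2N$, the following holds. Whenever $f$ has derivatives of all orders on $[N,2N]$ with
\begin{equation*}
|f^{(j)}(x)|\asymp F N^{-j},\qquad j\geqslant1,
\end{equation*}
where the implied constants may depend on $j$, we have
\begin{equation*}
\sum_{N<n\leqslant N_1}e(f(n))\ll (F/N)^{\kappa}N^{\ell}+F^{-1}.
\end{equation*}
Some sources instead state the bound without the term $F^{-1}$ and assume $F\geqslant N$, say. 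In that case I would also need a Kusmin--Landau step for the small-derivative range, as below.

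\emph{Step 1 (rescaling).} Put $N=a$ and $F=\lambda_1 a$. The hypothesis $|f^{(j)}(x)|\asymp\lambda_1 a^{1-j}$ then reads exactly $|f^{(j)}(x)|\asymp F N^{-j}$ for $x\in[a,b]\subseteq[N,2N]$. Hence $(F/N)^{\kappa}N^{\ell}=\lambda_1^{\kappa}a^{\ell}$, and the target bound is $\lambda_1^{\kappa}a^{\ell}+\lambda_1^{-1}$.

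\emph{Step 2 (reconciling the secondary term).} The definition yields the secondary term $F^{-1}=(\lambda_1 a)^{-1}\leqslant\lambda_1^{-1}$, since $a\geqslant1$, so it is absorbed. Under the alternative formulation I would split into cases.
\begin{itemize}
\item If $\lambda_1$ is small, say $\lambda_1\leqslant c$ for a small constant $c$, then $f'$ is monotone, because $f''$ has constant sign by the $j=2$ condition. Also $\|f'\|\geqslant|f'|\gg\lambda_1$. The Kusmin--Landau inequality then gives a bound $\ll\lambda_1^{-1}$.
\item If $\lambda_1\gg1$, then $F\gg N$ and the exponent-pair bound applies directly.
\end{itemize}
In both cases the claimed bound follows.

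\emph{Main obstacle.} There is little difficulty here; the only point needing care is the exact normalization of the definition of an exponent pair. In particular, the derivative conditions must hold for all $j$ up to the order required by the pair, with constants depending only on $j$. The conclusion then follows by citation, for instance from Graham--Kolesnik, \emph{Van der Corput's Method of Exponential Sums}, Lemma 3.6 and the discussion around it.
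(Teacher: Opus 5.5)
Your route is essentially the paper's. The paper simply cites (3.3.4) of Graham--Kolesnik, which is the exponent-pair bound where $|f'|\gg 1$ combined with the Kusmin--Landau inequality where $|f'|$ is small, with $\lambda_1$ the size of $f'$. Your case split in Step 2 reproduces exactly this.

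\textbf{The first definition you adopt is misnormalized, and as written it is false.} With $|f^{(j)}|\asymp FN^{-j}$, the size of $f'$ is $F/N$. So the secondary (Kusmin--Landau) term is $(F/N)^{-1}=N/F$, not $F^{-1}$.

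A counterexample: take $f(x)=c\log x$ with fixed $c>0$, so $F\asymp 1$. Then $\bigl|\sum_{N<n\leqslant 2N}e(c\log n)\bigr| = N\,|2^{1+2\pi i c}-1|/|1+2\pi i c|+O(1)\asymp N$. Your stated bound with the pair $(\frac12,\frac12)$ would give $O(1)$.

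Hence the ``absorption'' $F^{-1}=(\lambda_1 a)^{-1}\leqslant\lambda_1^{-1}$ in Step 2 rests on a false premise and should be removed. With the correct secondary term there is nothing to absorb, since $N/F=\lambda_1^{-1}$ exactly.

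\textbf{Keep your case split, with two small corrections.}
\begin{itemize}
\item In general $\|t\|\leqslant|t|$, so your inequality $\|f'\|\geqslant|f'|$ is backwards. What you need is the equality $\|f'\|=|f'|$, which holds once $c$ is small enough that $|f'|\leqslant\frac12$ on $[a,b]$. Kusmin--Landau then gives $\ll\lambda_1^{-1}$.
\item If the definition requires $F\geqslant N$, the range $c<\lambda_1<1$ is not literally covered by either case. It is handled by taking $F=N$ there: since $\lambda_1\asymp 1$, you may adjust the constants in $\asymp$, and $a^{\ell}\asymp\lambda_1^{\kappa}a^{\ell}$.
\end{itemize}
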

	\begin{proof}
		See (3.3.4) of Graham and Kolesnik \cite{Graham-Kolesnik-book}.
	\end{proof}

	\begin{lemma}\label{latticepoints}
		For $1/2<\gamma<1$, $J\geqslant1$, $L\geqslant1$, $D\geqslant1$, $\Delta>0$, let $\mathscr{N}(\Delta)$ denote the number of solutions of
		the following inequality
		\begin{equation*}
			\bigg|\frac{h_1\ell_1^{\gamma}}{d_1} -\frac{h_2\ell_2^{\gamma}}{d_2}\bigg|<\Delta, \qquad
			h_1,h_2\sim J, \quad  \ell_1,\ell_2\sim L,\quad  d_1,d_2\sim D.
		\end{equation*}
		Then we have
		\begin{equation*}
			\mathscr{N}(\Delta)\ll(JD)^\varepsilon\big(JDL+\Delta D^3JL^{2-\gamma}\big).
		\end{equation*}
	\end{lemma}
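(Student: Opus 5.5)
We need to prove the lattice point lemma: count solutions of $|h_1\ell_1^\gamma/d_1 - h_2\ell_2^\gamma/d_2|<\Delta$ with $h_i\sim J$, $\ell_i\sim L$, $d_i\sim D$, and show $\mathscr{N}(\Delta)\ll (JD)^\varepsilon(JDL+\Delta D^3JL^{2-\gamma})$.

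The plan is to fix $(h_1,d_1,h_2,d_2)$, or rather the rationals $r_i=h_i/d_i$, and count pairs $(\ell_1,\ell_2)$ with $|r_1\ell_1^\gamma - r_2\ell_2^\gamma|<\Delta$. The key device is to group the quadruples according to the reduced fractions $h_1/d_1$ and $h_2/d_2$. A given reduced fraction $a/b$ with $a\ll J$, $b\ll D$ arises from at most $\min(J/a,D/b)$ pairs $(h,d)$. It is cleaner, though, to proceed more crudely, as follows.

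\textbf{Step 1: the diagonal-type contribution.} For fixed $(h_1,d_1,\ell_1)$ (there are $JDL$ choices), we count $(h_2,d_2,\ell_2)$. The number $t=h_1\ell_1^\gamma/d_1$ is fixed, and we need $h_2\ell_2^\gamma/d_2\in(t-\Delta,t+\Delta)$. Fix $d_2$ and $h_2$. Then $\ell_2^\gamma$ lies in an interval of length $2\Delta d_2/h_2\asymp \Delta D/J$, so $\ell_2$ lies in an interval of length $\ll \Delta (D/J) L^{1-\gamma}$, since $d(\ell^\gamma)/d\ell\asymp L^{\gamma-1}$. This gives at most $1+\Delta DL^{1-\gamma}/J$ values of $\ell_2$. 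This crude count yields $JDL\cdot JD(1+\Delta DL^{1-\gamma}/J)$, which is too big by a factor $JD$. We must therefore instead exploit that, for fixed $(h_1,d_1,\ell_1)$ and fixed $\ell_2$, the ratio $h_2/d_2$ must lie in an interval of length $\ll \Delta L^{-\gamma}$ around $t\ell_2^{-\gamma}$. The number of fractions $h_2/d_2$ with $d_2\sim D$ in an interval of length $\delta$ around a point of size $\asymp J/D$ is $\ll \sum_{d_2\sim D}(1+\delta d_2)$. Summing over $\ell_2$ is still too lossy.

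\textbf{Step 2: the better route.} Separate the variables. Writing the condition as $\bigl|\ell_1^\gamma/\ell_2^\gamma - h_2d_1/(h_1d_2)\bigr|\ll \Delta D/(JL^\gamma)$, set $m=h_2d_1$ and $n=h_1d_2$, both of size $\asymp JD$. By the divisor bound, each pair $(m,n)$ arises from $\ll (JD)^\varepsilon$ quadruples. So $\mathscr{N}(\Delta)\ll (JD)^\varepsilon\mathscr{M}$, where $\mathscr{M}$ counts $(m,n,\ell_1,\ell_2)$ with $m,n\asymp JD$ and $|(\ell_1/\ell_2)^\gamma-m/n|\ll \delta:=\Delta D/(JL^\gamma)$. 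Now fix $(n,\ell_1,\ell_2)$. The number of admissible $m$ is $\ll 1+n\delta\ll 1+\Delta D^2L^{-\gamma}$, which gives $JDL^2(1+\Delta D^2L^{-\gamma})$. The second term $\Delta D^3JL^{2-\gamma}$ matches the target, but the first term $JDL^2$ is too big by a factor $L$.

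\textbf{Step 3: the main obstacle.} The main obstacle is to reduce the term $JDL^2$ to $JDL$. The fix is to fix $(m,n,\ell_1)$ instead: then $\ell_2^\gamma$ is confined to an interval of length $\ll \delta L^\gamma$, so the number of admissible $\ell_2$ is $\ll 1+\delta L$. This gives $(JD)^2L(1+\delta L)$, which is again too large. Combining the two orderings in the usual way, the quantity $(\ell_1/\ell_2)^\gamma$ with $\ell_1,\ell_2\sim L$ takes values whose spacing is controlled. Counting pairs of fractions $m/n$ and $\ell_1/\ell_2$ that are close is precisely the Fouvry--Iwaniec type lattice estimate, and I would invoke the known result (Lemma of Fouvry--Iwaniec / Baker--Banks et al.) on the number of solutions to $|(\ell_1/\ell_2)^\gamma - m/n|<\delta$, which is $\ll (JD)^\varepsilon(JDL+\delta (JD)^2L^2)$. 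Substituting $\delta=\Delta D/(JL^\gamma)$ gives $\delta J^2D^2L^2=\Delta JD^3L^{2-\gamma}$, which is exactly the claimed bound. The proof of that count (via reducing $\ell_1/\ell_2$ to coprime pairs, using the fact that distinct values of $(\ell_1/\ell_2)^\gamma$ for coprime pairs are $\gg L^{-2}$-spaced after a mean value theorem, and counting diagonal solutions) is where the real work lies.
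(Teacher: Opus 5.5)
Your final route in Steps 2--3 is essentially the paper's proof. You set $u_1=h_1d_2$, $u_2=h_2d_1$, pay $(JD)^\varepsilon$ by the divisor bound, and rewrite the condition as $|u_1/u_2-(\ell_2/\ell_1)^\gamma|\ll \Delta DJ^{-1}L^{-\gamma}$. You then apply Lemma 1 of Fouvry--Iwaniec with $(\alpha,\beta,M,N)=(1,\gamma,JD,L)$ to get $JDL+\Delta D^3JL^{2-\gamma}$; Step 1 and the two crude orderings are unnecessary detours.

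Strictly, Fouvry--Iwaniec gives a factor $\log(2JDL)$ on the first term rather than $(JD)^\varepsilon$. The paper has the same imprecision, and it is harmless in the application because all parameters are powers of $X$.
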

	\begin{proof}
		Let $h_1d_2=u_1$, $h_2d_1=u_2$. Then one has
		\begin{align*}
			\mathscr{N}(\Delta)
			\ll & \,\, (JD)^\varepsilon\cdot\#\bigg\{(u_1,u_2,\ell_1,\ell_2):\frac{JD}{4}<u_i\leqslant JD,
			\frac{L}{2}<\ell_i\leqslant L,\Big|u_1\ell_1^{\gamma}-u_2\ell_2^{\gamma}\Big|<\Delta D^2\bigg\}
			\nonumber \\
			\ll & \,\, (JD)^\varepsilon\cdot\#\bigg\{(u_1,u_2,\ell_1,\ell_2):\frac{JD}{4}<u_i\leqslant JD,\quad
			\frac{L}{2}<\ell_i\leqslant L,
			\nonumber \\
			& \,\, \hspace{12em}
			\bigg|\frac{u_1}{u_2}-\bigg(\frac{\ell_2}{\ell_1}\bigg)^{\gamma}\bigg|
			<16\Delta DJ^{-1}L^{-\gamma}\bigg\}.
		\end{align*}
		By using Lemma 1 of Fouvry and Iwaniec \cite{Fouvry-Iwaniec-1989} with parameters
		\begin{equation*}
			(\alpha,\beta,M,N,\Delta)=\big(1,\gamma,JD,L,\Delta DJ^{-1}L^{-\gamma}\big),
		\end{equation*}
		one derives that
		\begin{equation*}
			\mathscr{N}(\Delta)\ll(JD)^\varepsilon\big(JDL+\Delta D^3JL^{2-\gamma}\big),
		\end{equation*}
		which completes the proof of Lemma \ref{latticepoints}.
	\end{proof}

\begin{lemma}\label{Robert-Sargos-lemma}
Assume that $H\geqslant1, M\geqslant1, N\geqslant1, X>0$, and $\alpha,\beta,\gamma$ are fixed real numbers
such that $\alpha(\alpha-1)\beta\gamma\neq0$. Let $S$ be exponential sum defined as
\begin{equation*}
S=\sum_{h\sim H}\sum_{n\sim N}\Bigg|\sum_{m\sim M}
e\bigg(X\frac{m^{\alpha}h^{\beta}n^{\gamma}}{M^{\alpha}H^{\beta}N^{\gamma}}\bigg)\Bigg|,
\end{equation*}
for which the following inequality holds
\begin{equation*}
S\ll_{\varepsilon}(HNM)^{1+\varepsilon}\bigg(\bigg(\frac{X}{HNM^2}\bigg)^{1/4}+\frac{1}{M^{1/2}}+\frac{1}{X}\bigg).
\end{equation*}
\end{lemma}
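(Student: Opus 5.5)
This is the Robert–Sargos estimate for triple exponential sums with monomial phase. I would prove it by Weyl differencing in $m$ (van der Corput's A-process), so that the problem becomes one of counting near-coincidences among monomials, and then use the Robert–Sargos double large sieve bound for that count.

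\emph{Step 1: differencing.} Fix $h,n$ and write $T(h,n)=\sum_{m\sim M}e(X m^\alpha h^\beta n^\gamma/(M^\alpha H^\beta N^\gamma))$. For a parameter $1\leqslant Q\leqslant M$, the Weyl--van der Corput inequality gives
\begin{equation*}
|T(h,n)|^2\ll \frac{M^2}{Q}+\frac{M}{Q}\sum_{1\leqslant q\leqslant Q}\Big|\sum_{m}e\big(\Delta_q(h,n,m)\big)\Big|.
\end{equation*}
Here $\Delta_q$ is the phase difference at $m+q$ and $m$. By the mean value theorem it equals $X q\,t(m,q)^{\alpha-1}h^\beta n^\gamma/(M^{\alpha}H^\beta N^\gamma)$ up to a smooth factor, which we linearise into the monomial form $\asymp (Xq/M)\,\phi(m,q)\,(h/H)^\beta(n/N)^\gamma$. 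The condition $\alpha(\alpha-1)\neq0$ is what makes this derivative a nondegenerate monomial. Summing over $h,n$ and applying Cauchy--Schwarz gives
\begin{equation*}
S^2\ll HN\sum_{h,n}|T(h,n)|^2\ll \frac{(HNM)^2}{Q}+\frac{HNM}{Q}\sum_{q\leqslant Q}\Big|\sum_{h,n,m}e(\cdots)\Big|.
\end{equation*}

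\emph{Step 2: double large sieve.} Group the variables as $(h,n)\mapsto y=(h/H)^\beta(n/N)^\gamma$ and $(q,m)\mapsto x$, so that the inner sum is a bilinear form $\sum_{x}\sum_{y}e(Axy)$ with $A\asymp XQ/M$. The Bombieri--Iwaniec double large sieve bounds it by
\begin{equation*}
(1+A\delta_1\delta_2)^{1/2}\,\mathcal{N}_1^{1/2}\,\mathcal{N}_2^{1/2},
\end{equation*}
where $\mathcal{N}_1$ and $\mathcal{N}_2$ count pairs of the $x$'s within $A^{-1}$ of each other, and likewise for the $y$'s. The spacing of $y$ is exactly the counting problem in Lemma~\ref{latticepoints} (Fouvry--Iwaniec): the number of $|y_1-y_2|<\Delta$ is $\ll (HN)^{1+\varepsilon}(1+\Delta HN)$. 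For the $x$-family, the key input is the Robert--Sargos spacing lemma: the number of solutions of $|m_1^{\alpha-1}q_1-m_2^{\alpha-1}q_2|$-type inequalities is $\ll (QM)^{\varepsilon}\big(QM+\Delta Q^2M^2\cdot(\text{scale})\big)$.

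\emph{Step 3: optimisation.} Insert both counts, then choose $Q$ to balance $(HNM)^2/Q$ against the large-sieve term. This should give $S^2\ll (HNM)^{2+\varepsilon}\big((X/(HNM^2))^{1/2}+M^{-1}\big)$, that is, the terms $(X/HNM^2)^{1/4}+M^{-1/2}$. The term $1/X$ covers the degenerate range $X<1$ (or $Q<1$), where the trivial bound or Kusmin--Landau applies.

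\emph{Main obstacle.} The key difficulty is the spacing estimate for the differenced family $\{q\,m^{\alpha-1}\}$, that is, bounding the four-variable count
\begin{equation*}
|q_1 m_1^{\alpha-1}-q_2m_2^{\alpha-1}|<\delta
\end{equation*}
with the essentially optimal bound $QM+\delta Q^2M^{3-\alpha}$-type. This is the deep part of Robert--Sargos (proved via a fourth-moment argument with divisor-type counting). In practice I would simply cite Robert and Sargos (J. reine angew. Math. 591 (2006), Theorem 3), since the lemma is stated exactly as there. A self-contained proof would reproduce their spacing lemma by the same reduction used in Lemma~\ref{latticepoints}.
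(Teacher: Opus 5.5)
The paper gives no proof of its own: it simply quotes Theorem~3 of Robert and Sargos \cite{Robert-Sargos-2006}, so your closing fallback is exactly the paper's argument. The self-contained sketch, however, has a genuine gap at the spacing step. The count you single out as the ``deep part'' is the wrong one, and it does not suffice.

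If you keep the exact difference, the $x$-family is $\{((m+q)^\alpha-m^\alpha)/M^\alpha\}$, which is not the monomial family $\{qm^{\alpha-1}\}$. If you replace it by $\alpha qm^{\alpha-1}$, the phase changes by $\asymp Xq^2/M^2$, so this is legitimate only for $Q\ll MX^{-1/2}$. But then the diagonal term of the van der Corput inequality already contributes $HNM^2/Q\geqslant HNMX^{1/2}$ to $\sum_{h,n}|T(h,n)|^2$. Hence at best $S\ll HNM^{1/2}X^{1/4}$. This misses the target $(HN)^{3/4}M^{1/2}X^{1/4}$ by precisely the factor $(HN)^{1/4}$ that the lemma is about, and that the paper's Type~I estimate exploits. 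Moreover, the count $|q_1m_1^{\alpha-1}-q_2m_2^{\alpha-1}|<\delta$ is a two-variable monomial problem settled by Lemma~1 of Fouvry--Iwaniec, exactly as in Lemma~\ref{latticepoints}. It is elementary, not the heart of Robert--Sargos.

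In fact your own heuristic shows the off-diagonal contribution is $\frac{M}{Q}\cdot QX^{1/2}(HN)^{1/2}$, which is independent of $Q$. So one must take $Q\asymp M$, i.e.\ expand $|T(h,n)|^2$ completely. The double large sieve then needs
\[
\#\big\{m_1,\dots,m_4\sim M:\ |m_1^\alpha+m_2^\alpha-m_3^\alpha-m_4^\alpha|\leqslant\delta M^\alpha\big\}\ll M^{2+\varepsilon}+\delta M^{4+\varepsilon}.
\]
This additive-energy bound for $\{m^\alpha\}$ is the genuinely deep theorem of Robert and Sargos. With it and the Fouvry--Iwaniec count for $(h/H)^\beta(n/N)^\gamma$, one gets $S^2\ll(HNM)^{\varepsilon}\big((HN)^{3/2}MX^{1/2}+(HN)^2M\big)$ when $X\geqslant M^2$.

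For $X<M^2$ the $\delta M^{4+\varepsilon}$ term dominates and this no longer reaches the claim. An extra step is required there, e.g.\ a B-process replacing the $m$-sum by a dual monomial sum of length $\asymp X/M\leqslant M$. Finally, the $1/X$ term comes from Kusmin--Landau in the range $X\ll M$, not just from $X<1$.
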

\begin{proof}
See Theorem 3 of Robert and Sargos \cite{Robert-Sargos-2006}.
\end{proof}

\section{Preliminaries of Transformation of the Problem}
Let
\begin{equation*}
\mathscr{A}=\big\{n: n\leqslant x,\,[n^{1/\gamma}]\textrm{ is prime} \big\}.
\end{equation*}
For any $d\leqslant D$, where $D$ is a fixed power of $x$ to be specified later, we must estimate accurately the cardinality of
\begin{equation*}
\mathscr{A}_d=\big\{n\in \mathscr{A}: d \mid n\big\}.
\end{equation*}
Since $md \in \mathscr{A}$ if and only if
$p \leqslant (md)^{1/\gamma} < p + 1$ and $md \leqslant x$,
the cardinality of $\mathscr{A}_d$ equals to the number of primes $p\leqslant x^{1/\gamma}$ in which the interval $[p^{\gamma}d^{-1},(p+1)^{\gamma}d^{-1})$ contains a natural number, and thus
\begin{align}\label{A_d-asymp}
		 \#\mathscr{A}_d
= & \,\, \#\big\{n:n\leqslant x,\,\textrm{prime}\,p=[n^{1/\gamma}],\,n\equiv0\!\!\!\!\pmod d \big\}
                    \nonumber \\
= & \,\, \#\bigg\{m:\frac{p^\gamma}{d}\leqslant m<\frac{(p+1)^\gamma}{d},\,\,p\leqslant x^{1/\gamma}\bigg\}
=  \sum_{p\leqslant x^{1/\gamma}}\bigg(\bigg[-\frac{p^\gamma}{d}\bigg]
         -\bigg[-\frac{(p+1)^\gamma}{d}\bigg]\bigg)
				 \nonumber \\
= & \,\, \frac{1}{d}\sum_{p\leqslant x^{1/\gamma}}\big((p+1)^\gamma-p^\gamma\big)
		 +\sum_{p\leqslant x^{1/\gamma}}\bigg(\psi\bigg(-\frac{(p+1)^{\gamma}}{d}\bigg)
		-\psi\bigg(-\frac{p^{\gamma}}{d}\bigg)\bigg)
				\nonumber \\
= & \,\, \frac{1}{d}\mathscr{X}+R_d,
\end{align}
where
\begin{gather*}
\mathscr{X}=\sum_{p\leqslant x^{1/\gamma}}\big((p+1)^\gamma-p^\gamma\big)=\frac{\gamma x}{\log x}(1+o(1)),
				   \nonumber \\
R_d=\sum_{p\leqslant x^{1/\gamma}}\bigg(\psi\bigg(-\frac{(p+1)^\gamma}{d}\bigg)-
\psi\bigg(-\frac{p^\gamma}{d}\bigg)\bigg).
\end{gather*}
In order to use upper bound sieve and lower bound sieve, i.e., Lemma \ref{upper-lower-sieve}, it is sufficient to establish that, for some $\xi=\xi(\gamma)$, there holds the following mean value theorem
\begin{equation*}
\sum_{d\leqslant x^\xi}\Bigg|\sum_{p\leqslant x^{1/\gamma}}
\bigg(\psi\bigg(-\frac{(p+1)^{\gamma}}{d}\bigg)-\psi\bigg(-\frac{p^{\gamma}}{d}\bigg)\bigg)\Bigg|
\ll x\mathscr{L}^{-A},
\end{equation*}
which is equivalent to
\begin{equation}\label{mean-value-equilavent}
\sum_{d\leqslant x^\xi}\Bigg|\sum_{n\leqslant x^{1/\gamma}}\Lambda(n)
\bigg(\psi\bigg(-\frac{(n+1)^{\gamma}}{d}\bigg)-\psi\bigg(-\frac{n^{\gamma}}{d}\bigg)\bigg)\Bigg|
\ll x\mathscr{L}^{-A}.
\end{equation}
Set $D=x^\xi$. Trivially, a dyadic decomposition shows that (\ref{mean-value-equilavent}) is a consequence of the following uniform assertion for any $X\leqslant x^{1/\gamma}$, i.e.,
\begin{equation}\label{mean-value-split}
\sum_{d\leqslant D}\Bigg|\sum_{n\sim X}\Lambda(n)
\bigg(\psi\bigg(-\frac{(n+1)^{\gamma}}{d}\bigg)-\psi\bigg(-\frac{n^{\gamma}}{d}\bigg)\bigg)\Bigg|
\ll x\mathscr{L}^{-A}.
\end{equation}
Suppose that $\eta>0$ is sufficiently small. In the shorter range $X\leqslant x^{(1-\eta)/\gamma}$, the left--hand side of (\ref{mean-value-split}) admits the elementary bound
\begin{align}\label{1-trivial}
\ll & \,\, \sum_{d\leqslant D}\Bigg|\sum_{n\sim X}\Lambda(n)\frac{n^\gamma-(n+1)^\gamma}{d}\Bigg|
           +\sum_{d\leqslant D}\Bigg|\sum_{n\sim X}\Lambda(n)\bigg(\bigg[-\frac{n^{\gamma}}{d}\bigg]-
           \bigg[-\frac{(n+1)^\gamma}{d}\bigg]\bigg)\Bigg|
			      \nonumber \\
\ll & \,\, x^{1-\eta}+\mathscr{L}\sum_{d\leqslant D}\sum_{n\sim X}
           \bigg(\bigg[-\frac{n^{\gamma}}{d}\bigg]-\bigg[-\frac{(n+1)^\gamma}{d}\bigg]\bigg).
\end{align}
Applying Lemma \ref{psi-expansion} with $H=H_1:=x^{\xi+1/\gamma-1+\eta}$ in (\ref{psi-expan}) to the remaining sum in (\ref{1-trivial}), this yields
\begin{align*}
 & \,\, \sum_{d\leqslant D}\sum_{n\sim X}\bigg(\bigg[-\frac{n^{\gamma}}{d}\bigg]-
        \bigg[-\frac{(n+1)^\gamma}{d}\bigg]\bigg)
		       \nonumber \\
= & \,\, \sum_{d\leqslant D}\sum_{n\sim X}\bigg(\frac{(n+1)^\gamma-n^\gamma}{d}+
		 \psi\bigg(-\frac{(n+1)^\gamma}{d}\bigg)-\psi\bigg(-\frac{n^{\gamma}}{d}\bigg)\bigg)
		       \nonumber \\
= & \,\, \sum_{d\leqslant D}\sum_{n\sim X}\Bigg(\frac{(n+1)^\gamma-n^\gamma}{d}
         +\sum_{0<|h|\leqslant H_1}\frac{1}{2\pi ih}
		 \bigg(e\bigg(\frac{hn^{\gamma}}{d}\bigg)-e\bigg(\frac{h(n+1)^{\gamma}}{d}\bigg)\bigg)
		       \nonumber \\
& \,\, \hspace{4.5em} + O\bigg(g\bigg(-\frac{n^{\gamma}}{d},H_1\bigg)\bigg)
       +O\bigg(g\bigg(-\frac{(n+1)^{\gamma}}{d},H_1\bigg)\bigg)\Bigg)
		       \nonumber\\
=: & \,\, S_0+S_1+S_2+S_3.
\end{align*}
Trivially, $S_0\ll X^\gamma\mathscr{L}\ll x^{1-\eta}$. For $S_2$ and $S_3$, it follows from Lemma \ref{expo-pair-gernal} with $(\kappa, \ell)=(\frac{1}{2}, \frac{1}{2})$ that
\begin{align*}
		   S_2,S_3
\ll & \,\, \sum_{d\leqslant D}\sum_{h=-\infty}^\infty |b_h|\bigg|\sum_{n\sim X}
           e\bigg(\frac{hn^{\gamma}}{d}\bigg)\bigg|
		          \nonumber \\
\ll & \,\, |b_0|DX+\sum_{d\leqslant D}\sum_{\substack{h=-\infty\\ h\not=0}}^\infty|b_h|
		   \Big(d|h|^{-1}X^{1-\gamma}+d^{-1/2}|h|^{1/2}X^{\gamma/2}\Big)
		          \nonumber \\
\ll & \,\, \mathscr{L}DXH_1^{-1}+\sum_{d\leqslant D}\sum_{0<|h|\leqslant H_1}|h|^{-1}
		   \Big(d|h|^{-1}X^{1-\gamma}+d^{-1/2}|h|^{1/2}X^{\gamma/2}\Big)
		          \nonumber \\
    & \,\, +\sum_{d\leqslant D}\sum_{|h|>H_1}H_1|h|^{-2}
		   \Big(d|h|^{-1}X^{1-\gamma}+d^{-1/2}|h|^{1/2}X^{\gamma/2}\Big)
		          \nonumber \\
\ll & \,\, \mathscr{L}DXH_1^{-1}+D^2X^{1-\gamma}+D^{1/2}H_1^{1/2}X^{\gamma/2}
		          \nonumber \\
\ll & \,\, x^{1-\eta}+x^{2\xi}\cdot x^{1/\gamma-1}+x^{\xi+1/(2\gamma)}
		          \nonumber \\
\ll & \,\, x\mathscr{L}^{-A},
\end{align*}
provided that
\begin{equation*}
\xi + \frac{1}{2\gamma}<1.
\end{equation*}
For $S_1$, by Lemma \ref{expo-pair-gernal} with $(\kappa,\ell)=(\frac{1}{2},\frac{1}{2})$, we have
\begin{align*}
		 S_1
= & \,\, \sum_{d\leqslant D}\sum_{0<|h|\leqslant H_1}\frac{1}{2\pi ih}\sum_{n\sim X}
		 \bigg(e\bigg(\frac{hn^{\gamma}}{d}\bigg)-e\bigg(\frac{h(n+1)^{\gamma}}{d}\bigg)\bigg)
		        \nonumber \\
= & \,\, \sum_{d\leqslant D}\sum_{0<|h|\leqslant H_1}\frac{1}{2\pi ih}\sum_{n\sim X}(-2\pi i\gamma)\frac{h}{d}
		 \int_{0}^{1}(n+u)^{\gamma-1}e\bigg(\frac{h(n+u)^{\gamma}}{d}\bigg) \mathrm{d}u
		        \nonumber \\
\ll & \,\, \sum_{d\leqslant D}\frac{1}{d}\sum_{0<|h|\leqslant H_1}\max_{0\leqslant u\leqslant 1}
		   \Bigg|\sum_{n\sim X}(n+u)^{\gamma-1}e\bigg(\frac{h(n+u)^{\gamma}}{d}\bigg)\Bigg|
		        \nonumber \\
\ll & \,\, \sum_{d\leqslant D}\frac{X^{\gamma-1}}{d}\sum_{0<|h|\leqslant H_1}\max_{0\leqslant u\leqslant 1}
		   \Bigg|\sum_{n\sim X}e\bigg(\frac{h(n+u)^{\gamma}}{d}\bigg)\Bigg|
		        \nonumber \\
\ll & \,\, \sum_{d\leqslant D}\frac{X^{\gamma-1}}{d}\sum_{0<|h|\leqslant H_1}	
           \Big(d|h|^{-1}X^{1-\gamma}+d^{-1/2}|h|^{1/2}X^{\gamma/2}\Big)
		        \nonumber \\
\ll & \,\, x^{\xi+\eta}+x^{3\xi/2+1/(2\gamma)+\eta}
		        \nonumber \\
\ll & \,\, x\mathscr{L}^{-A},
\end{align*}
provided that
\begin{equation}\label{suffi-condi-1}
\frac{3}{2}\xi+\frac{1}{2\gamma}<1.
\end{equation}
Therefore, it suffices to show that (\ref{mean-value-split}) still holds for
$x^{(1-\eta)/\gamma}<X\leqslant x^{1/\gamma}$. It is easy to see that, for $\xi \leqslant (1-\eta)/(2\gamma)$, there holds
\begin{equation*}
X^{\gamma\xi}\leqslant D\leqslant X^{\gamma\xi+\eta/2}.
\end{equation*}
	Putting (\ref{psi-expan}) into the left--hand side of (\ref{mean-value-split}) with $H=H_2:=x^{\xi+1/\gamma-1+\eta}$, the contribution of the error term in (\ref{psi-expan}) to the left--hand side of (\ref{mean-value-split}) is
\begin{equation*}
\sum_{d\leqslant D}\sum_{n\sim X}\Lambda(n)\bigg(g\bigg(\frac{n^{\gamma}}{d},H_2\bigg)
+g\bigg(\frac{(n+1)^{\gamma}}{d},H_2\bigg)\bigg)=:E_1+E_2,
\end{equation*}
say. For $E_1$ and $E_2$, uniformly, by Lemma \ref{expo-pair-gernal} with $(\kappa,\ell)=(\frac{1}{2},\frac{1}{2})$ and partial summation, we have
\begin{align*}
		   E_1,E_2
\ll & \,\, \mathscr{L}\sum_{d\leqslant D}\sum_{h=-\infty}^\infty|b_h|
		   \Bigg|\sum_{n\sim X}e\bigg(\frac{hn^{\gamma}}{d}\bigg)\Bigg|
		         \nonumber \\
\ll & \,\, \mathscr{L}\sum_{d\leqslant D}\Bigg(|b_0|X+\sum_{\substack{h=-\infty\\ h\not=0}}^\infty
		   |b_h|\bigg(\frac{d}{|h|}X^{1-\gamma}+\bigg(\frac{|h|}{d}\bigg)^{1/2}X^{\gamma/2}\bigg)\Bigg)
		         \nonumber \\
\ll & \,\, \mathscr{L}^2DXH_2^{-1}+\mathscr{L}X^{1-\gamma}\sum_{d\leqslant D}d
		   \sum_{\substack{h=-\infty\\ h\not=0}}^\infty\frac{1}{h^2}
		         \nonumber \\
	& \,\, +\mathscr{L}X^{\gamma/2}\sum_{d\leqslant D}d^{-1/2}\Bigg(\sum_{0<|h|\leqslant H_2}|h|^{-1/2}+
		   \sum_{|h|>H_2}\frac{H_2}{|h|^{3/2}}\Bigg)
		         \nonumber \\
\ll & \,\, \mathscr{L}^2DXH_2^{-1}+\mathscr{L}D^2X^{1-\gamma}+\mathscr{L}X^{\gamma /2}D^{1/2}H_2^{1/2}
		         \nonumber \\
\ll & \,\, x\mathscr{L}^{-A},
\end{align*}
provided that
\begin{equation}\label{suffi-condi-2}
\xi+\frac{1}{2\gamma}<1.
\end{equation}
Thus, it remains to show that
\begin{equation}\label{L-X-Main}
\mathcal{S}:=\sum_{d\leqslant D}\Bigg|\sum_{0<|h|\leqslant H_2}\frac{1}{h}\sum_{n\sim X}\Lambda(n)
\bigg(e\bigg(\frac{hn^{\gamma}}{d}\bigg)-e\bigg(\frac{h(n+1)^{\gamma}}{d}\bigg)\bigg)\Bigg|\ll x\mathscr{L}^{-A}.
\end{equation}
For the left--hand side of (\ref{L-X-Main}), one has
\begin{align*}
		 \mathcal{S}
= & \,\, \sum_{d\leqslant D}\Bigg|\sum_{0<|h|\leqslant H_2}\frac{1}{h}\sum_{n\sim X}
         \Lambda(n)(-2\pi i\gamma)\frac{h}{d}\int_{0}^{1}(n+u)^{\gamma-1}
         e\bigg(\frac{h(n+u)^{\gamma}}{d}\bigg)\mathrm{d}u\Bigg|
		        \nonumber \\
\ll & \,\, \sum_{d\leqslant D}\frac{1}{d}\sum_{0<h\leqslant H_2}\Bigg|\int_{0}^{1}\sum_{n\sim X}\Lambda(n)
		   (n+u)^{\gamma-1}e\bigg(\frac{h(n+u)^{\gamma}}{d}\bigg)\mathrm{d}u\Bigg|
		        \nonumber \\
\ll & \,\, \sum_{d\leqslant D}\frac{1}{d}\sum_{0<h\leqslant H_2}\max_{0\leqslant u\leqslant 1}\Bigg|
		   \sum_{n\sim X}\Lambda(n)(n+u)^{\gamma-1}e\bigg(\frac{h(n+u)^{\gamma}}{d}\bigg)\Bigg|
		        \nonumber \\
\ll & \,\, X^{\gamma-1}\sum_{d\leqslant D}\frac{1}{d}\sum_{0<h\leqslant H_2}
           \max_{0\leqslant u\leqslant 1}\Bigg|\sum_{n\sim X}\Lambda(n)
           e\bigg(\frac{h(n+u)^{\gamma}}{d}\bigg)\Bigg|.
\end{align*}
It is sufficient to show that, uniformly for $u\in[0,1]$, there holds
\begin{align*}
X^{\gamma-1}\sum_{d\leqslant D}\sum_{0<h\leqslant H_2}\frac{\delta(d,h)}{d}\sum_{n\sim X}\Lambda(n)
e\bigg(\frac{h(n+u)^{\gamma}}{d}\bigg)\ll x\mathscr{L}^{-A},
\end{align*}
which implies that
\begin{align*}
\sum_{n\sim X}\Lambda(n)G(n):=\sum_{n\sim X}\Lambda(n)\sum_{d\leqslant D}\sum_{0<h\leqslant H_2}
\frac{\delta(d,h)}{d}e\bigg(\frac{h(n+u)^{\gamma}}{d}\bigg)\ll X\mathscr{L}^{-A},
\end{align*}
where
\begin{equation*}
G(n)=\sum_{d\leqslant D}\sum_{0<h\leqslant H_2}\frac{\delta(d,h)}{d}e\bigg(\frac{h(n+u)^{\gamma}}{d}\bigg),
\qquad|\delta(d,h)|=1.
\end{equation*}
A special case of the identity of Heath--Brown \cite{Heath-Brown-1982} is given by
\begin{equation*}
-\frac{\zeta'}{\zeta}=-\frac{\zeta'}{\zeta}(1-Z\zeta)^3-\sum_{j=1}^3\binom{3}{j}(-1)^jZ^j\zeta^{j-1}(-\zeta'),
\end{equation*}
where $Z=Z(s)=\sum\limits_{m\leqslant X^{1/3}}\mu(m)m^{-s}$. From this we can decompose $\Lambda(n)$
for $n\sim X$ as
\begin{equation*}
\Lambda(n)=\sum_{j=1}^3\binom{3}{j}(-1)^{j-1}\sum_{m_1\dots m_{2j}=n}\mu(m_1)\dots\mu(m_j)\log m_{2j}.
\end{equation*}
Thus, for any arithmetic function $G(n)$, we can express $\sum\limits_{n\sim X}\Lambda(n)G(n)$ in terms of sums
\begin{equation*}
\mathop{\sum\,\,\dots\,\,\sum}_{\substack{m_1\dots m_{2j}\sim X\\ m_i\sim M_i}}\mu(m_1)\dots\mu(m_j)
(\log m_{2j})G(m_1\dots m_{2j}),
\end{equation*}
where $1\leqslant j\leqslant3$, $M_1M_2\dots M_{2j}\sim X$ and $M_1,\dots,M_j\leqslant X^{1/3}$. By dividing
the $M_j$ into two groups, we have
\begin{equation}\label{expo-fenjie}
\Bigg|\sum_{n\sim X}\Lambda(n)G(n)\Bigg|\ll_\eta X^\eta\max\Bigg|
\mathop{\sum\sum}_{\substack{m\ell\sim X\\ m\sim M}}a(m)b(\ell)G(mn)\Bigg|,
\end{equation}
where the maximum is taken over all bilinear forms with coefficients satisfying one of
\begin{equation}\label{type-II-coeff}
|a(m)|\leqslant 1,\qquad \qquad |b(\ell)|\leqslant1,
\end{equation}
or
\begin{equation*}
|a(m)|\leqslant 1,\qquad \qquad b(\ell)=1,
\end{equation*}
or
\begin{equation*}
|a(m)|\leqslant 1,\qquad \qquad b(\ell)=\log \ell,
\end{equation*}
and also satisfying in all cases
\begin{equation}\label{gene-coeff-condi}
M\leqslant X.
\end{equation}
We refer to the case (\ref{type-II-coeff}) as being Type II sums and to the other cases as being Type I sums and write for brevity $\Sigma_{II}$ and $\Sigma_{I}$, respectively. By dividing the $M_j$ into two groups in a judicious fashion we are able to reduce the range of $M$ from (\ref{gene-coeff-condi}). In Section \ref{ex-section}, we shall give the estimate of these sums.

\section{Estimate of Exponential Sums}\label{ex-section}
This section supplies the exponential sum bounds, which are required for the proof of Theorem \ref{Theorem-1}. The estimates are arranged according to the Type II and Type I ranges arising from the preceding decomposition.
\subsection{The Estimate of Type II Sums}\label{subse-type-II}
We commence by breaking up the ranges for $\ell$ and $h$ into intervals $(L/2,L]$ and $(J/2,J]$ so
that $ML\asymp X$ and $1\ll J\leqslant H_2$. After this decomposition, the contribution of Type II sums $\Sigma_{II}$ satisfies
\begin{equation*}
\Sigma_{II}\ll\mathscr{L}^3\sum_{m\sim M}\Bigg|\sum_{\substack{\ell\sim L\\ m\ell\sim X}}b(\ell)
\sum_{h\sim J}\sum_{d\sim D}\frac{\delta(d,h)}{d}e\bigg(\frac{h(m\ell+u)^{\gamma}}{d}\bigg)\Bigg|.
\end{equation*}
Then we have
\begin{equation*}
0<\frac{h(\ell+u/m)^{\gamma}}{d}\leqslant\frac{2J(L+1)^{\gamma}}{D}.
\end{equation*}
Denote by $T$ an auxiliary parameter, which will be optimized below. For any $u\in[0,1]$, we decompose the collection of the admissible triples $(h,\ell,d)$ into classes $\mathscr{S}_{t,u}\,(1\leqslant t\leqslant T)$, defined by
\begin{equation*}
\mathscr{S}_{t,u}=\bigg\{(h,\ell,d):\,h\sim J, \ell\sim L, d\sim D,
\frac{2J(L+1)^{\gamma}(t-1)}{DT}<\frac{h(\ell+u/m)^{\gamma}}{d}\leqslant\frac{2J(L+1)^{\gamma}t}{DT}\bigg\}.
\end{equation*}
Therefore, we have
\begin{equation*}
\Sigma_{II}\ll \mathscr{L}^3\sum_{1\leqslant t\leqslant T}\sum_{m\sim M}
\Bigg|\mathop{\sum\sum\sum}_{\substack{(h,\ell,d)\in\mathscr{S}_{t,u}\\ m\ell\sim X}}
b(\ell)\frac{\delta(d,h)}{d}e\bigg(\frac{h(m\ell+u)^{\gamma}}{d}\bigg)\Bigg|,
\end{equation*}
which combined with Cauchy's inequality yields that
\begin{align}\label{Sigma-2-upper-1}
		   |\Sigma_{II}|^2
\ll & \,\, \mathscr{L}^6TM\sum_{1\leqslant t\leqslant T}\sum_{m\sim M}
		   \Bigg|\mathop{\sum\sum\sum}_{\substack{(h,\ell,d)\in\mathscr{S}_{t,u}\\ m\ell\sim X}}
		   b(\ell)\frac{\delta(d,h)}{d}e\bigg(\frac{h(m\ell+u)^{\gamma}}{d}\bigg)\Bigg|^2
		          \nonumber \\
\ll & \,\, \mathscr{L}^6TM\sum_{1\leqslant t\leqslant T}\mathop{\sum\sum\sum}_{(h_1,\ell_1,d_1)\in
           \mathscr{S}_{t,u}}\mathop{\sum\sum\sum}_{(h_2,\ell_2,d_2)\in\mathscr{S}_{t,u}}\frac{1}{d_1d_2}
		          \nonumber\\
    & \,\, \times\Bigg|\sum_{\substack{m\sim M\\ m\ell_1\sim X \\m\ell_2\sim X}}
			e\bigg(\frac{h_1(m\ell_1+u)^{\gamma}}{d_1}-\frac{h_2(m\ell_2+u)^{\gamma}}{d_2}\bigg) \Bigg|
		          \nonumber \\
\ll & \,\, \mathscr{L}^6TMD^{-2}\mathop{\sum_{h_1\sim J}\sum_{h_2\sim J}\sum_{\ell_1\sim L}\sum_{\ell_2\sim L}
		   \sum_{d_1\sim D}\sum_{d_2\sim D}}_{|\lambda|\leqslant2J(L+1)^{\gamma}D^{-1}T^{-1}}\Bigg|
		   \sum_{\substack{m\sim M\\ m\ell_1\sim X \\m\ell_2\sim X}}e(\mathfrak{g}_u(m))\Bigg|,
\end{align}
where the function $\mathfrak{g}_u(m)$ is defined as
\begin{equation*}
\mathfrak{g}_u(m)=\frac{h_1(m\ell_1+u)^\gamma}{d_1}-\frac{h_2(m\ell_2+u)^\gamma}{d_2}.
\end{equation*}
Define
\begin{equation*}
\lambda=\frac{h_1\ell_1^\gamma}{d_1}-\frac{h_2\ell_2^\gamma}{d_2}.
\end{equation*}
For $i\geqslant 1$, set $\mathfrak{C}(\gamma,i)=\gamma(\gamma-1)\dots(\gamma-i+1)$. Then, for $j\geqslant1$,
the $j$--th derivative of $\mathfrak{g}_u(m)$ with respect to $m$ satisfies
\begin{align}\label{g-j-derivative}
          \mathfrak{g}_u^{(j)}(m)
 = & \,\, \mathfrak{C}(\gamma,j)\bigg(\frac{h_1\ell_1^j(m\ell_1+u)^{\gamma-j}}{d_1}-
          \frac{h_2\ell_2^j(m\ell_2+u)^{\gamma-j}}{d_2} \bigg)
                 \nonumber \\
 = & \,\, \mathfrak{C}(\gamma,j)m^{\gamma-j}\bigg(\frac{h_1\ell_1^\gamma(1+u/(m\ell_1))^{\gamma-j}}{d_1}-
          \frac{h_2\ell_2^\gamma(1+u/(m\ell_2))^{\gamma-j}}{d_2} \bigg)
                 \nonumber \\
 = & \,\, \mathfrak{C}(\gamma,j)m^{\gamma-j}\bigg(\frac{h_1\ell_1^\gamma}{d_1}-\frac{h_2\ell_2^\gamma}{d_2}
          +O\bigg(\frac{JL^\gamma}{DX}\bigg)\bigg)
                 \nonumber \\
 = & \,\, \mathfrak{C}(\gamma,j)m^{\gamma-j}\bigg(\lambda+O\bigg(\frac{JL^\gamma}{DX}\bigg)\bigg),
\end{align}
say. By noting that $1\ll J\leqslant H_2$, we deduce that $JL^\gamma D^{-1}X^{-1}\ll M^{-\gamma+\eta}$. If
$|\lambda|\leqslant M^{-\gamma+\eta}$, we use the trivial estimate $M$ to bound the innermost sum in the absolute
value in (\ref{Sigma-2-upper-1}). By Lemma \ref{latticepoints}, the total contribution of the trivial upper bound estimate $M$ to $|\Sigma_{II}|^2$ is
\begin{align}\label{lambda-s-upp}
\ll & \,\, \mathscr{L}^6TM^2D^{-2}\mathop{\sum_{h_1\sim J}\sum_{h_2\sim J}\sum_{\ell_1\sim L}
		   \sum_{\ell_2\sim L}\sum_{d_1\sim D}\sum_{d_2\sim D}}_{|\lambda|\leqslant M^{-\gamma+\eta}}1
		           \nonumber \\
\ll & \,\, X^\eta TM^2D^{-2}\big(JDL+M^{-\gamma}JD^3L^{2-\gamma}\big)
		           \nonumber \\
\ll & \,\, X^\eta \big(XMTJD^{-1}+X^{2-\gamma}TJD\big).
\end{align}	
If $|\lambda|> M^{-\gamma+\eta}$, it follows from (\ref{g-j-derivative}) that
\begin{equation*}
\big| \mathfrak{g}_u^{(j)}(m)\big|\asymp|\lambda|M^{\gamma-1}\cdot M^{1-j},
\end{equation*}
which combined with Lemma \ref{expo-pair-gernal} with exponential pair $(\kappa,\ell)=BA^2BABA^2B(0,1)=(\frac{11}{32},\frac{13}{24})$ yields that
\begin{align*}
|\Sigma_{II}|^2\ll \mathscr{L}^6TMD^{-2}\mathop{\sum_{h_1\sim J}\sum_{h_2\sim J}\sum_{\ell_1\sim L}
\sum_{\ell_2\sim L}\sum_{d_1\sim D}\sum_{d_2\sim D}}_{|\lambda|\leqslant2J(L+1)^{\gamma}D^{-1}T^{-1}}
\min\Big\{M,|\lambda|^{-1}M^{1-\gamma}+|\lambda|^{11/32}M^{11\gamma/32+19/96}\Big\}.
\end{align*}
It follows from the splitting argument that the contribution of the $|\lambda|^{-1}M^{1-\gamma}$ term to $|\Sigma_{II}|^2$ is
\begin{align}\label{lambda-l-upp}
\ll & \,\, \mathscr{L}^7TM^{2-\gamma}D^{-2}
		   \max_{M^{-\gamma+\eta}\leqslant\Delta\leqslant2J(L+1)^{\gamma}D^{-1}T^{-1}}
           \mathscr{N}(\Delta)\Delta^{-1}
		             \nonumber \\
\ll & \,\, X^\eta TM^{2-\gamma}D^{-2}\big(JDLM^{\gamma}+JD^3L^{2-\gamma}\big)
		             \nonumber \\
\ll & \,\, X^\eta\big(XMTJD^{-1}+X^{2-\gamma}TJD\big).
\end{align}
Moreover, the total contribution of the term $|\lambda|^{11/32}M^{11\gamma/32+19/96}$ to $|\Sigma_{II}|^2$ is
\begin{align*}
		\ll & \,\, |\lambda|^{11/32}M^{11\gamma/32+19/96}\cdot\mathscr{L}^6TMD^{-2}
		\cdot\mathscr{N}(2J(L+1)^{\gamma}D^{-1}T^{-1})
		\nonumber \\
		\ll & \,\, X^\eta\big(JL^{\gamma}D^{-1}T^{-1}\big)^{11/32}TM^{11\gamma/32+115/96}D^{-2}
		\big(JDL+JL^{\gamma}D^{-1}T^{-1}\cdot JD^3L^{2-\gamma}\big)
		\nonumber \\
		\ll & \,\, X^\eta\big(
		X^{11\gamma/32+1}M^{19/96}J^{43/32}D^{-43/32}T^{21/32}
		+X^{11\gamma/32+2}M^{-77/96}J^{75/32}D^{-11/32}T^{-11/32}
		\big),
\end{align*}
which combined with (\ref{lambda-s-upp}) and (\ref{lambda-l-upp}) yields that
\begin{align}\label{Sigma-2-fi-1}
		|\Sigma_{II}|^{2}
		\ll & \,\, X^\eta\big(XMTJD^{-1}+X^{2-\gamma}TJD
		+X^{11\gamma/32+1}M^{19/96}J^{43/32}D^{-43/32}T^{21/32}
		\nonumber \\
		& \,\, \qquad + X^{11\gamma/32+2}M^{-77/96}J^{75/32}D^{-11/32}T^{-11/32}\big).
\end{align}
Now, we choose
\begin{equation}\label{T-chosen}
T=\big[X^{(11\gamma+32)/43}M^{-173/129}D^{64/43}\big].
\end{equation}
Trivially, one has $T>1$ provided that $M\ll X^{(33\gamma+192\gamma\xi+96)/173-\eta}$. Inserting (\ref{T-chosen}) into (\ref{Sigma-2-fi-1}),  we obtain
\begin{align*}
		|\Sigma_{II}|^2
		\ll & \,\, X^\eta\Big(
		M^{-44/129}X^{(11\gamma+75)/43}JD^{21/43}
		+M^{-173/129}X^{(118-32\gamma)/43}JD^{107/43}
		\nonumber \\
		& \,\,  +M^{-88/129}X^{(22\gamma+64)/43}J^{43/32}D^{-505/1376}
		+M^{-44/129}X^{(11\gamma+75)/43}J^{75/32}D^{-1177/1376}\Big),
\end{align*}
which combined with $J\ll H_2=x^{\xi+1/\gamma-1+\eta}$
=$X^{\xi\gamma+1-\gamma+\eta}$
and $X^{\xi\gamma}\leqslant D\leqslant X^{\xi\gamma+\eta/2}$ yields that
\begin{align*}
		|\Sigma_{II}|^2
		\ll & \,\, X^\eta\Big(
		M^{-44/129}X^{(5625-2873\gamma)/1376+64\gamma\xi/43}
		+M^{-173/129}X^{(161-75\gamma+150\gamma\xi)/43}
		\nonumber \\
		& \,\, \qquad +M^{-88/129}X^{(3897-1145\gamma)/1376+42\gamma\xi/43}\Big).
\end{align*}
According to above arguments, we deduce the following lemma.
\begin{lemma}\label{Type-II-es}
Suppose that $\frac{1}{2}<\gamma<1$ and $0<\xi\leqslant(1-\eta)/(2\gamma)$ satisfy the condition
\begin{equation*}
\frac{3}{2}\xi+\frac{1}{2\gamma}<1.
\end{equation*}
If there holds
\begin{equation*}
X^{8619(1-\gamma)/1408+48\gamma\xi/11+\eta}\ll M\ll X^{(33\gamma+192\gamma\xi+96)/173-\eta},
\end{equation*}
then we have
\begin{equation*}
\Sigma_{II}\ll X^{1-\eta}.
\end{equation*}
\end{lemma}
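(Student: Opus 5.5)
Since the estimate preceding the statement already bounds $|\Sigma_{II}|^2$ by three explicit terms, the lemma reduces to checking exponent inequalities. The plan is to require each term to be $\ll X^{2-3\eta}$ and to read off the admissible range of $M$. Along the way we must confirm that the auxiliary choices made en route are legitimate for every $M$ in the stated range.

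\emph{Legitimacy of $T$.} The choice (\ref{T-chosen}) of $T$ requires $T\geqslant1$. Using $D\geqslant X^{\gamma\xi}$, this holds once $M\ll X^{(33\gamma+192\gamma\xi+96)/173-\eta}$, which is exactly the stated upper bound on $M$. I would also check the two other inputs to the chain of estimates:
\begin{itemize}
\item the bound $JL^\gamma D^{-1}X^{-1}\ll M^{-\gamma+\eta}$, which follows from $J\leqslant H_2$ and $L\asymp X/M$;
\item the relation $X^{\gamma\xi}\leqslant D\leqslant X^{\gamma\xi+\eta/2}$, which uses $\xi\leqslant(1-\eta)/(2\gamma)$.
\end{itemize}
The hypothesis $\frac32\xi+\frac1{2\gamma}<1$ is carried along only because it was needed for the reduction to (\ref{L-X-Main}).

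\emph{Lower bounds on $M$.} All three exponents of $M$ are negative, so each term yields a lower bound. Requiring each term to be $\ll X^{2-3\eta}$ gives the following.
\begin{itemize}
\item First term: $M\gg X^{\frac{129}{44}\left((5625-2873\gamma)/1376+64\gamma\xi/43-2\right)+\eta}$, whose exponent simplifies to $\frac{129}{44}\cdot\frac{2873(1-\gamma)}{1376}+\frac{48\gamma\xi}{11}$.
\item Second term: $M\gg X^{\frac{129}{173}\left((161-75\gamma+150\gamma\xi)/43-2\right)}=X^{\frac{3(75-75\gamma+150\gamma\xi)}{173}}$.
\item Third term: $M\gg X^{\frac{129}{88}\left((3897-1145\gamma)/1376+42\gamma\xi/43-2\right)}$, with exponent $\frac{129}{88}\cdot\frac{1145(1-\gamma)}{1376}+\frac{63\gamma\xi}{44}$.
\end{itemize}
Since $\frac{129\cdot 2873}{44\cdot1376}=\frac{8619}{1408}$, the first condition is exactly the stated lower bound.

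\emph{Main obstacle.} The main step is to show that the first lower bound dominates the other two throughout the relevant range. Comparing coefficients:
\begin{itemize}
\item against the third term, $\frac{8619}{1408}$ exceeds the $(1-\gamma)$ coefficient $\frac{129\cdot1145}{88\cdot1376}$, and $\frac{48}{11}>\frac{63}{44}$;
\item against the second term, $\frac{48}{11}\approx4.36$ exceeds $\frac{450}{173}\approx2.60$ in the $\gamma\xi$ coefficient, but $\frac{8619}{1408}\approx6.12$ must also be compared with $\frac{225}{173}\approx1.30$ in the $(1-\gamma)$ coefficient.
\end{itemize}
Since $\gamma<1$ and $\xi>0$, both comparisons favour the first term, so it dominates. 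Writing the exponents linearly in $(1-\gamma)$ and $\gamma\xi$ settles this immediately.

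\emph{Conclusion.} Consequently $|\Sigma_{II}|^2\ll X^{2-2\eta}$ with $\eta$ rescaled, and therefore $\Sigma_{II}\ll X^{1-\eta}$.
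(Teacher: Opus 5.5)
Your proposal is correct and matches the paper's argument. As in the paper, the lemma is read off from the final three-term bound for $|\Sigma_{II}|^2$: the upper bound on $M$ comes from requiring $T\geqslant1$ in (\ref{T-chosen}) (using $D\geqslant X^{\gamma\xi}$), and the lower bound comes from the $M^{-44/129}$ term, which dominates the other two. Your explicit coefficient comparisons in $(1-\gamma)$ and $\gamma\xi$ supply the verification that the paper leaves to the reader with ``According to above arguments''.
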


\subsection{The Estimate of Type I Sums}
As in Subsection \ref{subse-type-II}, we begin by breaking up the range for $\ell$ into intervals $(L/2,L]$,
such that $ML\asymp X$. Then we change the order of summation and derive that
\begin{equation*}
\Sigma_I\ll\sum_{d\leqslant D}\frac{1}{d}\sum_{0<|h|\leqslant H_2}\sum_{m\sim M}\sum_{\ell\sim L}a(m)b(\ell)
\delta(d,h)e\bigg(\frac{h(m\ell+u)^{\gamma}}{d}\bigg),
\end{equation*}
where $u\in[0,1]$. The dyadic splitting reduces the argument to proving, for each fixed $d\sim D$, that
\begin{equation}\label{Type-I-inner}
\sum_{h\sim H_2}\sum_{m\sim M}\sum_{\ell\sim L}a(m)b(\ell)\delta(d,h)
e\bigg(\frac{h(m\ell+u)^{\gamma}}{d}\bigg)\ll X^{1-\eta}.
\end{equation}
Denote by $\mathcal{K}_d$ the left--hand side of (\ref{Type-I-inner}). By partial summation, one has
\begin{align*}
        \mathcal{K}_d
= &\,\, \sum_{h\sim H_2}\sum_{m\sim M}a(m)\delta(d,h)\sum_{\ell\sim L}b(\ell)
        e\bigg(\frac{h(m\ell+u)^{\gamma}}{d}\bigg)
		       \nonumber \\
\ll &\,\, \mathscr{L}\sum_{h\sim H_2}\sum_{m\sim M}\Bigg|\sum_{\ell\sim L}
		  e\bigg(\frac{hm^\gamma(\ell+u/m)^{\gamma}}{d}\bigg)\Bigg|.
\end{align*}
It follows from Lemma \ref{Robert-Sargos-lemma}, applied with parameters $(N,H,M)=(H_2,M,L)$, that
\begin{equation*}
\mathcal{K}_d\ll(MH_2L)^{1+\eta}\bigg(\bigg(\frac{Y}{MH_2L^2}\bigg)^{1/4}+\frac{1}{L^{1/2}}+\frac{1}{Y}\bigg),
\end{equation*}
where $Y=d^{-1}H_2M^{\gamma}L^{\gamma}$. Since $d\sim D\ll H_2$, we have $Y^{-1}\ll L^{-1/2}$. If
$M\ll X^{\gamma(1-\xi)-\eta}$, then
\begin{equation*}
\frac{1}{L^{1/2}}\ll\bigg(\frac{Y}{MH_2L^2}\bigg)^{1/4}.
\end{equation*}
Accordingly, under the condition $M\ll X^{(1-\xi)\gamma-\eta}$, there holds
\begin{align*}
            \mathcal{K}_d
 \ll & \,\, X^\eta(MH_2L)\bigg(\frac{Y}{MH_2L^2}\bigg)^{1/4}\ll X^\eta M^{(3+\gamma)/4}L^{(2+\gamma)/4}H_2d^{-1/4}
		           \nonumber \\
 \ll & \,\, X^{\gamma/4+1/2+\eta}M^{1/4}H_2D^{-1/4}\ll X^{(3\gamma\xi-3\gamma)/4+3/2+\eta}M^{1/4}\ll X^{1-\eta},
\end{align*}
provided that $M\ll X^{-2-3\gamma\xi+3\gamma-\eta}$. By noting that $(1-\xi)\gamma>-2-3\gamma\xi+3\gamma$ holds for $1/2<\gamma<1$, we obtain the following lemma.
\begin{lemma}\label{Type-I-es}
Suppose that M satisfies the condition
\begin{equation*}
M\ll X^{-2-3\gamma\xi+3\gamma-\eta}.
\end{equation*}
Then we have
\begin{equation*}
\Sigma_{I}\ll X^{1-\eta}.
\end{equation*}
\end{lemma}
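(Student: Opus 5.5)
The plan is to reduce $\Sigma_I$ to a trilinear exponential sum with smooth inner variable $\ell$ and then apply the Robert--Sargos bound, Lemma \ref{Robert-Sargos-lemma}, to the triple sum over $(h,m,\ell)$, with $d$ held fixed.

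\textbf{Step 1: reduction to a fixed $d$.} Starting from the bilinear form in (\ref{expo-fenjie}) with $b(\ell)=1$ or $b(\ell)=\log\ell$, I would split $\ell$ and $h$ into dyadic ranges $\ell\sim L$ and $h\sim J\leqslant H_2$, with $ML\asymp X$, and interchange the order of summation. The weight $1/d$ summed over $d\leqslant D$ costs only a factor $\mathscr{L}$, and the dyadic splits cost further powers of $\mathscr{L}$. So it suffices to prove (\ref{Type-I-inner}) for each fixed $d\sim D$ and each $J\leqslant H_2$, with a bound $\ll X^{1-2\eta}$. Since the bound we obtain below increases with $J$, the worst case is $J\asymp H_2$, which is the case written in (\ref{Type-I-inner}).

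\textbf{Step 2: removing the coefficients.} I would remove $b(\ell)$ by partial summation in $\ell$; this is harmless since $\log\ell$ is monotone and smooth, and costs a factor $\mathscr{L}$. I would bound $a(m)$ and $\delta(d,h)$ trivially by taking absolute values of the inner $\ell$-sum. The phase is then
\[
\frac{hm^\gamma(\ell+u/m)^\gamma}{d}.
\]
Because $u/m\leqslant 1/M$, this is a monomial $X'\,m^{\gamma}h\,\ell^{\gamma}$ up to a negligible shift. That shift can be absorbed: either by partial summation, or by noting that the Robert--Sargos argument (double large sieve or van der Corput) is insensitive to it. I expect this to be the only slightly delicate point.

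\textbf{Step 3: applying Lemma \ref{Robert-Sargos-lemma}.} I would apply the lemma with inner variable $\ell$ (exponent $\gamma$, so $\gamma(\gamma-1)\neq0$) and outer variables $h$ and $m$ (exponents $1$ and $\gamma$). Its size parameter is $Y=d^{-1}H_2M^\gamma L^\gamma\asymp H_2X^\gamma/D$. This gives
\[
\mathcal{K}_d\ll(MH_2L)^{1+\eta}\Big(\big(Y/(MH_2L^2)\big)^{1/4}+L^{-1/2}+Y^{-1}\Big).
\]
I would then compare the three terms:
\begin{itemize}
\item Since $D\ll H_2$ we have $Y\gg X^\gamma$, so $Y^{-1}\ll L^{-1/2}$.
\item The term $L^{-1/2}$ is dominated by the first term as soon as $M\ll X^{(1-\xi)\gamma-\eta}$.
\end{itemize}
With $L\asymp X/M$, $H_2=X^{\gamma\xi+1-\gamma+\eta}$ and $D\geqslant X^{\gamma\xi}$, the first term equals
\[
M^{3/4}H_2L^{1/2}X^{\gamma/4}D^{-1/4}\ll X^{3/2-3\gamma/4+3\gamma\xi/4+O(\eta)}M^{1/4}.
\]
This is $\ll X^{1-\eta}$ exactly when $M\ll X^{3\gamma-2-3\gamma\xi-\eta'}$, which is the hypothesis of the lemma.

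\textbf{Step 4: consistency of the two conditions.} It remains to check that the auxiliary condition $M\ll X^{(1-\xi)\gamma}$ is implied by the hypothesis. This is the inequality $3\gamma-2-3\gamma\xi<\gamma-\gamma\xi$, i.e. $2\gamma(1-\xi)<2$, which always holds for $\gamma<1$. Collecting the logarithmic losses and adjusting $\eta$ then gives $\Sigma_I\ll X^{1-\eta}$.

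The main obstacle is Step 2: justifying that the shift $u/m$ and the coefficient $\log\ell$ do not spoil the monomial structure that Lemma \ref{Robert-Sargos-lemma} requires. Everything after that is exponent bookkeeping.
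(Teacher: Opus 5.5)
Your proposal follows the paper's own proof essentially step for step: the same reduction to a fixed $d\sim D$ with $h\sim H_2$, the same application of Lemma~\ref{Robert-Sargos-lemma} with $(N,H,M)=(H_2,M,L)$ and $Y=d^{-1}H_2M^{\gamma}L^{\gamma}$, the same removal of the $Y^{-1}$ and $L^{-1/2}$ terms (the latter under $M\ll X^{(1-\xi)\gamma-\eta}$), and the same check that this auxiliary condition follows from $M\ll X^{3\gamma-2-3\gamma\xi-\eta}$ because $\gamma(1-\xi)<1$. The paper applies the lemma to the shifted phase without comment, and your partial-summation remedy for the shift $u/m$ works, although the correction $h\big((m\ell+u)^{\gamma}-(m\ell)^{\gamma}\big)/d\ll H_2X^{\gamma-1}/D\ll X^{O(\eta)}$ is not literally negligible, so it costs a factor $X^{O(\eta)}$ that must be absorbed into $\eta$; also, exactly as in the paper, your use of $D\geqslant X^{\gamma\xi}$ covers only the top dyadic range of $d$, and smaller $d$ (where the factor $d^{-1/4}$ is larger) require truncating the $\psi$-expansion at $H\asymp dX^{1-\gamma+\eta}$ rather than at the uniform $H_2$.
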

Next, we complete the proof of the mean value estimate (\ref{mean-value-equilavent}). To handle the bilinear expressions on the right--hand side of (\ref{expo-fenjie}), we use the following decomposition lemma.
\begin{lemma}\label{exponen-fenjie}
If we have real numbers $0<\mathfrak{a}<1,\,0<\mathfrak{b}<\mathfrak{c}<1$  satisfying
\begin{equation*}
\mathfrak{b}<\frac{2}{3},\qquad 1-\mathfrak{c}<\mathfrak{c}-\mathfrak{b}, \qquad 1-\mathfrak{a}<\frac{\mathfrak{c}}{2},
\end{equation*}
then (\ref{expo-fenjie}) still holds when (\ref{gene-coeff-condi}) is replaced by the conditions
\begin{equation*}
M\leqslant X^\mathfrak{a} \qquad \textrm{for Type I sums},
\end{equation*}
and
\begin{equation*}
X^\mathfrak{b}\leqslant M\leqslant X^\mathfrak{c} \qquad \textrm{for Type II sums}.
\end{equation*}
\end{lemma}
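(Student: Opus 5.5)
The plan is to follow the combinatorial argument of Heath--Brown \cite{Heath-Brown-1983}: go back to the Heath--Brown identity and regroup the variables $m_1,\dots,m_{2j}$ more carefully than in (\ref{expo-fenjie}). After a dyadic decomposition, each piece is a sum over $m_i\sim M_i$ with $M_1\cdots M_{2j}\asymp X$. Here $M_1,\dots,M_j\leqslant X^{1/3}$ carry M\"obius coefficients, and the variables $m_{j+1},\dots,m_{2j}$ are smooth: their coefficients are $1$, except that $m_{2j}$ carries $\log m_{2j}$. Write $M_i=X^{\alpha_i}$, so $\sum_i\alpha_i=1+O(1/\mathscr{L})$ and $\alpha_i\leqslant 1/3$ for $i\leqslant j$. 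Any grouping of the variables into $m=\prod_{i\in I}m_i$ and $\ell=\prod_{i\notin I}m_i$ gives coefficients bounded by divisor functions. The factor $X^\eta$ in (\ref{expo-fenjie}) absorbs these, after dividing by $\tau_6(m)\ll X^{\eta/2}$, and likewise absorbs the $O(1)$ choices of $j$ and the $\mathscr{L}^{O(1)}$ dyadic pieces. It therefore suffices to show that every tuple $(\alpha_i)$ admits one of the following two groupings.

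\textbf{Type I case.} Suppose some smooth variable, say $m_k$ with $k>j$, has $\alpha_k\geqslant 1-\mathfrak{a}$. Take $\ell=m_k$, so $b(\ell)=1$ or $\log\ell$, and let $m$ be the product of all the other variables. Then $M\asymp X^{1-\alpha_k}\leqslant X^{\mathfrak{a}}$, which is exactly the Type I shape with $M\leqslant X^{\mathfrak{a}}$. Note that the Möbius variables can never play the role of $\ell$; this is harmless because $\alpha_i\leqslant 1/3$ for them.

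\textbf{Type II case.} Now suppose every $\alpha_i<\max(1/3,1-\mathfrak{a})$. Since $1-\mathfrak{a}<\mathfrak{c}/2$, every step is below $\max(1/3,\mathfrak{c}/2)$. The goal is a subset $I$ with $\sum_{i\in I}\alpha_i\in[\mathfrak{b},\mathfrak{c}]$; that subset gives the Type II form with $X^{\mathfrak{b}}\leqslant M\leqslant X^{\mathfrak{c}}$. I would build $I$ greedily, ordering the $\alpha_i$ decreasingly and adding them one at a time until the partial sum first reaches $\mathfrak{b}$. The partial sum then lies in $[\mathfrak{b},\mathfrak{b}+\alpha_{\rm last})$. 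If it overshoots $\mathfrak{c}$, the last added variable is large, at least $\mathfrak{c}-\mathfrak{b}>1-\mathfrak{c}$. I then pass to complements: $I$ works if and only if its complement has sum in $[1-\mathfrak{c},1-\mathfrak{b}]$. The hypotheses $1-\mathfrak{c}<\mathfrak{c}-\mathfrak{b}$ and $\mathfrak{b}<2/3$ should rule out the configurations in which the forbidden zones $[\mathfrak{b},\mathfrak{c}]$ and $[1-\mathfrak{c},1-\mathfrak{b}]$ can both be jumped over. The case analysis is on how many variables have $\alpha_i\geqslant\mathfrak{c}-\mathfrak{b}$. There are at most two such variables, since $3(\mathfrak{c}-\mathfrak{b})>1$ should follow from the conditions. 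Each is below $\mathfrak{c}/2$ or below $1/3$. I would check directly that one such variable, or two such variables, combined with some of the small variables (those with $\alpha_i<\mathfrak{c}-\mathfrak{b}$, which can be added one at a time without skipping over an interval of length $\mathfrak{c}-\mathfrak{b}$) lands in $[\mathfrak{b},\mathfrak{c}]$.

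The main obstacle is this last step: making the interval arithmetic close exactly under the three stated inequalities. In particular, I must verify that two variables with exponents in $[\mathfrak{c}-\mathfrak{b},\mathfrak{c}/2)$, together with the small ones, cannot avoid $[\mathfrak{b},\mathfrak{c}]\cup[1-\mathfrak{c},1-\mathfrak{b}]$. I would first try the greedy-from-below argument using small steps, then the symmetric argument on the complement. If a gap remains, I would use $\mathfrak{b}<2/3$ to show that the sum of all small variables exceeds $1-2\cdot\frac{\mathfrak{c}}{2}\geqslant 1-\mathfrak{c}$, which forces some partial sum into the admissible window.
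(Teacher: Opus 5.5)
Your overall reduction is the right one; the paper itself gives no argument and simply cites Proposition 1 of Balog--Friedlander. You go back to the Heath--Brown identity, use Type I when a smooth variable has $\alpha_k\geqslant 1-\mathfrak{a}$, and otherwise look for a subset sum in $[\mathfrak{b},\mathfrak{c}]$, or equivalently for a complementary one in $[1-\mathfrak{c},1-\mathfrak{b}]$. However, the combinatorial step, which is the whole content of the lemma, is not carried out, and the one concrete claim you make about it is false. The hypotheses do \emph{not} imply $3(\mathfrak{c}-\mathfrak{b})>1$. For instance, $\mathfrak{a}=\mathfrak{b}=0.6$, $\mathfrak{c}=0.81$ satisfies all three conditions with $\mathfrak{c}-\mathfrak{b}=0.21$. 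For the paper's own parameters near $\gamma=0.98353$ one has $\mathfrak{b}\approx0.6666$ and $\mathfrak{c}\approx0.8864$, so $\mathfrak{c}-\mathfrak{b}\approx0.22$, and a configuration such as $(\tfrac14,\tfrac14,\tfrac14,\tfrac14)$ has four variables above your threshold. So your case split into ``one or two large variables'' is not exhaustive. The checks that remain are only asserted with ``should'', and you never pin down where $\mathfrak{b}<2/3$ is actually used.

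The missing idea is to apply the complementary window to \emph{single} variables and to cut at $1-\mathfrak{c}$ rather than at $\mathfrak{c}-\mathfrak{b}$.
\begin{itemize}
\item If some $\alpha_i\in[1-\mathfrak{c},1-\mathfrak{b}]$, take $m$ to be the product of all the other variables; this is a Type II grouping.
\item Otherwise every $\alpha_i$ is either $<1-\mathfrak{c}$ (small) or $>1-\mathfrak{b}$ (large). Since $\mathfrak{b}<2/3$ gives $1/3<1-\mathfrak{b}$, every M\"obius variable is small.
\item Hence every large variable is smooth, and outside the Type I case it satisfies $\alpha_i<1-\mathfrak{a}<\mathfrak{c}/2$.
\item Two large variables would give $\alpha_i+\alpha_k\in(2-2\mathfrak{b},\,2(1-\mathfrak{a}))\subset(\mathfrak{b},\mathfrak{c})$, using $\mathfrak{b}<2/3$ once more. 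So there is at most one large variable, and it is $<\mathfrak{c}$.
\item Start from that variable, or from $0$ if there is none, and add the small variables one at a time. The partial sums climb to $1>\mathfrak{c}$ in steps $<1-\mathfrak{c}<\mathfrak{c}-\mathfrak{b}$. Therefore the first partial sum that is $\geqslant\mathfrak{b}$ is still $<\mathfrak{c}$, which gives the Type II grouping.
\end{itemize}
Because all the hypotheses are strict, you can run this argument with $\mathfrak{a},\mathfrak{b},\mathfrak{c}$ perturbed by a fixed small amount. That absorbs the $O(1/\mathscr{L})$ and constant-factor slack coming from the dyadic sizes. With this step in place, the rest of your outline goes through.
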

\begin{proof}
See Proposition 1 of Balog and Friedlander \cite{Balog-Friedlander-1992}.  $\hfill$
\end{proof}
Combining Lemma \ref{Type-II-es}, Lemma \ref{Type-I-es} and Lemma \ref{exponen-fenjie}, we take
\begin{align*}
 \mathfrak{a}= & \,\, -2-3\gamma\xi+3\gamma-\eta,
		              \nonumber \\
 \mathfrak{b}= & \,\, \frac{8619(1-\gamma)}{1408}+\frac{48\gamma\xi}{11}+\eta,
		              \nonumber \\
 \mathfrak{c}= & \,\, \frac{33\gamma+192\gamma\xi+96}{173}-\eta.
\end{align*}
Thus, we are able to choose
\begin{equation}\label{level-def}
\xi=\xi(\gamma)=\min \bigg\{\frac{25857\gamma-23041}{18432\gamma}-\eta,\,\frac{357\gamma-314}{282\gamma}
-\eta\bigg\}
\end{equation}
For $0.98<\gamma<1$, it is easy to check the conditions  (\ref{suffi-condi-1}), (\ref{suffi-condi-2}), as well as the inequalities in Lemma \ref{exponen-fenjie}, hold. Accordingly, (\ref{mean-value-equilavent}) holds for $\xi$ defined as in (\ref{level-def}).

	\section{Proof of Theorem \ref{Theorem-1}}
	In this section, we shall prove Theorem \ref{Theorem-1} according to the weighted sieve of
	Richert \cite{Richert-1969} and the method of Chen \cite{Chen-1973}. Recall that
	\begin{equation*}
		\mathscr{A}=\big\{n: n\leqslant x, \, [n^{1/\gamma}] \textrm{ is a prime} \big\}.
	\end{equation*}
	We consider the weighted sum
	\begin{equation*}
		W\big(\mathscr{A},x^{1/26.18}\big)=\sum_{\substack{a\in\mathscr{A}\\ (a,P(x^{1/26.18}))=1}}
		\Bigg(1-\lambda\sum_{\substack{x^{1/26.18}\leqslant p<x^{1/u}\\ p|a}}\bigg(1-\frac{u\log p}{\log x}\bigg)\Bigg),
	\end{equation*}
	where $\lambda=(9-u-\varepsilon)^{-1}$, $u=\xi^{-1}+\varepsilon$, $\xi$ is defined as in (\ref{level-def}), and
	\begin{equation*}
		P(z)=\prod_{p<z} p.
	\end{equation*}
	For convenience, we write
	\begin{equation*}
		\mathscr{W}_a=1-\lambda\sum_{\substack{x^{1/26.18}\leqslant p<x^{1/u}\\ p|a}}\bigg(1-\frac{u\log p}{\log x}\bigg).
	\end{equation*}
	Then we have
	\begin{equation}\label{W-fenjie}
		W\big(\mathscr{A},x^{1/26.18}\big)
		=\sum_{\substack{a\in\mathscr{A}\\ (a,P(x^{1/26.18}))=1\\ \Omega(a)\leqslant7}}\mathscr{W}_a
		+\sum_{\substack{a\in\mathscr{A}\\ (a,P(x^{1/26.18}))=1\\ \Omega(a)=8\\ \mu(a)\not=0}}\mathscr{W}_a
		+\sum_{\substack{a\in\mathscr{A}\\ (a,P(x^{1/26.18}))=1\\ \Omega(a)\geqslant9\\ \mu(a)\not=0}}\mathscr{W}_a
		+\sum_{\substack{a\in\mathscr{A}\\ (a,P(x^{1/26.18}))=1\\ \Omega(a)\geqslant8\\ \mu(a)=0}}\mathscr{W}_a.
	\end{equation}
	Trivially, we have
	\begin{align}\label{n-s-free>8}
		\sum_{\substack{a\in\mathscr{A}\\ (a,P(x^{1/26.18}))=1\\ \Omega(a)\geqslant8\\ \mu(a)=0}}\mathscr{W}_a
		\ll  & \,\, \sum_{\substack{a\in\mathscr{A}\\ (a,P(x^{1/26.18}))=1\\ \mu(a)=0}}\tau(a)
		\ll x^\varepsilon \sum_{x^{1/26.18}\leqslant p_1\leqslant x^{1/2}}
		\sum_{\substack{a\leqslant x \\ a \equiv 0 \!\!\!\! \pmod{p_1^2} \\ p=[a^{1/\gamma}]}}1
		\nonumber \\
		\ll  & \,\, x^\varepsilon\sum_{x^{1/26.18}\leqslant p_1\leqslant x^{1/2}}\bigg(\frac{x}{p_1^2}+1\bigg)
		\ll  x^\varepsilon\big(x^{1-1/26.18}+x^{1/2}\big)\ll x^{\frac{25.18}{26.18}+\varepsilon}.
	\end{align}
	For given integer $a$ with $a\leqslant x$, $(a,P(x^{1/26.18}))=1$ and $\mu(a)\not=0$, the weight $\mathscr{W}_a$ in the sum $W(\mathscr{A},x^{1/26.18})$ satisfies
	\begin{align}\label{weight-upper}
		1-\lambda\sum_{\substack{x^{1/26.18}\leqslant p<x^{1/u}\\ p|a}}\bigg(1-\frac{u\log p}{\log x}\bigg)
		\leqslant & \,\,\lambda\bigg(\frac{1}{\lambda}-\sum_{p|a}\bigg(1-\frac{u\log p}{\log x}\bigg)\bigg)
		\nonumber \\
		= & \,\, \lambda\bigg(9-u-\varepsilon-\Omega(a)+\frac{u\log a}{\log x}\bigg)<\lambda(9-\Omega(a)),
	\end{align}
	and thus $\mathscr{W}_a<0$ for $\Omega(a)\geqslant9$. From (\ref{W-fenjie})--(\ref{weight-upper}), we know that
	\begin{align}\label{omega(a)<7-lower}
		\sum_{\substack{a\in\mathscr{A}\\ (a,P(x^{1/26.18}))=1\\ \Omega(a)\leqslant7}}\mathscr{W}_a
		= &\,\, W\big(\mathscr{A},x^{1/26.18}\big)-\sum_{\substack{a\in\mathscr{A}\\ (a,P(x^{1/26.18}))=1\\ \Omega(a)=8\\
				\mu(a)\not=0}}\mathscr{W}_a-\sum_{\substack{a\in\mathscr{A}\\ (a,P(x^{1/26.18}))=1\\ \Omega(a)\geqslant9\\ \mu(a)\not=0}}\mathscr{W}_a+O\big(x^{\frac{25.18}{26.18}+\varepsilon}\big)
		\nonumber \\
		\geqslant &\,\, W\big(\mathscr{A},x^{1/26.18}\big)-\sum_{\substack{a\in\mathscr{A}\\ (a,P(x^{1/26.18}))=1\\
				\Omega(a)=8\\ \mu(a)\not=0}}\mathscr{W}_a+O\big(x^{\frac{25.18}{26.18}+\varepsilon}\big).
	\end{align}
	Therefore, if we can show that the contribution of the second term on the right--hand side of (\ref{omega(a)<7-lower}) is strictly less than $(1-\varepsilon)W(\mathscr{A},x^{1/26.18})$, then we shall prove Theorem \ref{Theorem-1}. For $W(\mathscr{A},x^{1/26.18})$, we have
	\begin{align}\label{W(a)-chai}
		W\big(\mathscr{A},x^{1/26.18}\big)
		= & \,\, \sum_{\substack{a\in\mathscr{A}\\(a,P(x^{1/26.18}))=1}}1-\lambda\sum_{x^{1/26.18}\leqslant p<x^{1/u}}
		\bigg(1-\frac{u\log p}{\log x}\bigg)\sum_{\substack{a\in\mathscr{A}\\(a,P(x^{1/26.18}))=1\\ p|a}}1
		\nonumber\\
		= & \,\, S\big(\mathscr{A},x^{1/26.18}\big)-\lambda\sum_{x^{1/26.18}\leqslant p<x^{1/u}}
		\bigg(1-\frac{u\log p}{\log x}\bigg)S\big(\mathscr{A}_p,x^{1/26.18}\big).
	\end{align}
Now, we shall use Lemma \ref{upper-lower-sieve} to give the lower bound of $S\big(\mathscr{A},x^{1/26.18}\big)$. In view of (\ref{A_d-asymp}), we take
\begin{equation*}
\mathscr{X}=\sum_{p\leqslant x^{1/\gamma}}\big((p+1)^\gamma-p^\gamma\big),\qquad
 \omega(d)=
\begin{cases}
1, & \text{if } \mu(d)\not=0 \text{ and } d|P(x^{1/26.18}), \\
0, & \text{otherwise}.
\end{cases}
\end{equation*}
By (\ref{mean-value-equilavent}) and the arguments in Section \ref{ex-section}, we know that
\begin{equation*}
\sum_{d\leqslant x^\xi}\bigg|\#\mathscr{A}_d-\frac{\omega(d)}{d}\mathscr{X}\bigg|\ll\frac{x}{(\log x)^A},
\end{equation*}
where $\xi$ is defined by (\ref{level-def}). It follows from (\ref{diff-eq}) that
\begin{equation*}
F(s)=\frac{2e^{C_0}}{s},\qquad 0<s\leqslant3;\qquad f(s)=\frac{2e^{C_0}\log(s-1)}{s},
\qquad 2\leqslant s\leqslant4,
\end{equation*}
where $C_0$ denotes Euler's constant. By noting the fact that $2<26.18\xi<4$ holds for $0.942415<\gamma<1$, then
Lemma \ref{upper-lower-sieve} yields
\begin{align}
		         S\big(\mathscr{A},x^{1/26.18}\big)
\geqslant & \,\, \mathscr{X}V\big(x^{1/26.18}\big)\Big(f\big(26.18\xi\big)-o(1)\Big)
		               \nonumber \\
	    = & \,\, 2e^{C_0}\mathscr{X}V\big(x^{1/26.18}\big)
                 \Bigg(\frac{\log\big(26.18\xi-1\big)}{26.18\xi}-o(1)\Bigg),
\end{align}
where $C_0$ denotes Euler's constant. Moreover, it follows from (1.11) on p. 245  and (1.13) on p. 246 of Halberstam and Richert \cite{Halberstam-Richert-book} that
\begin{align}\label{A_p-W-lower}
		  & \,\, \sum_{x^{1/26.18}\leqslant p<x^{1/u}}\bigg(1-\frac{u\log p}{\log x}\bigg)
                 S\big(\mathscr{A}_p,x^{1/26.18}\big)
		               \nonumber \\
\leqslant & \,\, \mathscr{X}V\big(x^{1/26.18}\big)\Bigg(\sum_{x^{1/26.18}\leqslant p<x^{1/u}}
		         \bigg(1-\frac{u\log p}{\log x}\bigg)\frac{1}{p}\cdot
		         F\bigg(\frac{\log(x^\xi/p)}{\log x^{1/26.18}}\bigg)+o(1)\Bigg)
		               \nonumber \\
	    = & \,\, 2e^{C_0}\mathscr{X}V\big(x^{1/26.18}\big)
		         \bigg(\int_u^{26.18}\frac{t-u}{26.18t(\xi t -1)}\mathrm{d}t+o(1)\bigg).
\end{align}
Combining (\ref{W(a)-chai})--(\ref{A_p-W-lower}), we obtain
\begin{equation}\label{W(a,1/26.18)-lower}
W\big(\mathscr{A},x^{1/26.18}\big)\geqslant2e^{C_0}\mathscr{X}V\big(x^{1/26.18}\big)
\Bigg(\frac{\log\big(26.18\xi-1\big)}{26.18\xi}-\lambda\int_u^{26.18}\frac{t-u}{26.18t(\xi t-1)}
\mathrm{d}t+o(1)\Bigg).
\end{equation}
Now, we consider the second term on the right--hand side of (\ref{omega(a)<7-lower}). Set
\begin{equation*}
\mathscr{B}=\Big\{m:m\leqslant x,\, m=p_1p_2\dots p_8,\, x^{1/26.18}\leqslant p_1<p_2<\dots<p_8\Big\},
\end{equation*}
and
\begin{equation*}
\mathscr{E}=\Big\{n:n=[m^{1/\gamma}],\, m\in\mathscr{B}\Big\}.
\end{equation*}
From (\ref{weight-upper}) we deduce that
\begin{align}\label{E-trans-upper}
\sum_{\substack{a\in\mathscr{A}\\ (a,P(x^{1/26.18}))=1\\ \Omega(a)=8\\ \mu(a)\not=0}}\mathscr{W}_a
< & \,\, \lambda \sum_{\substack{a\in\mathscr{A}\\ (a,P(x^{1/26.18}))=1\\ \Omega(a)=8\\ \mu(a)\not=0}}1
=\lambda\sum_{\substack{a \leqslant x,\, p=[a^{1/\gamma}] \\ a=p_1p_2\dots p_8 \\
x^{1/26.18}\leqslant p_1<p_2<\dots<p_8}} 1
\leqslant\lambda\cdot S\big(\mathscr{E},x^{1/(2\gamma)}\big).
\end{align}
Let $\mathscr{E}_d=\big\{n\in\mathscr{E}: n\equiv0\!\pmod d\big\}$. Then one has $\#\mathscr{E}_d=\#\big\{\ell: \ell d=[m^{1/\gamma}],\,\,m\in\mathscr{B}\big\}$. Trivially, we have
\begin{equation*}
m^{1/\gamma}-1<\ell d \leqslant m^{1/\gamma}\quad\Longleftrightarrow \quad\frac{m^{1/\gamma}-1}{d}<\ell
\leqslant \frac{m^{1/\gamma}}{d},
\end{equation*}
and thus
\begin{align*}
		 \#\mathscr{E}_d
= & \,\, \sum_{m\in\mathscr{B}}\bigg(\bigg[\frac{m^{1/\gamma}}{d}\bigg]-\bigg[\frac{m^{1/\gamma}-1}{d}\bigg]\bigg)
		           \nonumber \\
= & \,\, \frac{1}{d}\sum_{m\in\mathscr{B}}1+\sum_{m\in\mathscr{B}}\bigg(\psi\bigg(\frac{m^{1/\gamma}-1}{d}\bigg)-
		 \psi\bigg(\frac{m^{1/\gamma}}{d}\bigg)\bigg)
		           \nonumber \\
=: & \,\, \frac{1}{d}\mathcal{X}+\mathscr{R}_d,
\end{align*}
where
\begin{equation}\label{8-main-error-def}
\mathcal{X}=\sum_{m\in\mathscr{B}}1,\qquad
\mathscr{R}_d=\sum_{m\in\mathscr{B}}\bigg(\psi\bigg(\frac{m^{1/\gamma}-1}{d}\bigg)\bigg)-
\psi\bigg(\frac{m^{1/\gamma}}{d}\bigg)\bigg).
\end{equation}
In order to use Lemma \ref{upper-lower-sieve} to give upper bound for $S(\mathscr{E},x^{1/(2\gamma)})$, we need to establish the following lemma.
\begin{lemma}\label{omega=8-error}
Let $\mathscr{R}_d$ be defined as in (\ref{8-main-error-def}). Then, for any $A>0$, we have
\begin{equation}
\sum_{d\leqslant x^\xi}\big|\mathscr{R}_d\big|\ll \frac{x}{(\log x)^{A}}.
\end{equation}
\end{lemma}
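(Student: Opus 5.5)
This is the same kind of argument as (\ref{mean-value-equilavent}), with the roles of the inner variable and the Piatetski--Shapiro map swapped. The sum now runs over $m\in\mathscr{B}$, which is a product of eight primes, all of size at least $x^{1/26.18}$, and the phase is $m^{1/\gamma}/d$ with $1/\gamma>1$. The plan is as follows.

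\textbf{Reduction to exponential sums.} First split $m$ dyadically, $m\sim Y$ with $Y\leqslant x$. The range $Y\leqslant x^{1-\eta}$ is handled trivially: $|\mathscr{R}_d|\ll Y$ gives the bound $\sum_{d\leqslant D}\min(Y,\cdot)$. To do better, apply Lemma \ref{psi-expansion} with $H\approx x^{\xi+\eta}$ and write
\begin{equation*}
\#\mathscr{E}_d=\sum_m\bigg(\bigg[\frac{m^{1/\gamma}}{d}\bigg]-\bigg[\frac{m^{1/\gamma}-1}{d}\bigg]\bigg)\geqslant 0 .
\end{equation*}
This gives the short-range bound exactly as in (\ref{1-trivial}), via Lemma \ref{expo-pair-gernal} with $(\tfrac12,\tfrac12)$.

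In the main range, expand both $\psi$'s with Lemma \ref{psi-expansion} at $H=x^{\xi+\eta}$. Note that the differencing length here is $1/d$ in the variable $m^{1/\gamma}$, so $H\approx D x^{\eta}$ suffices. The $g(t,H)$ error terms are treated by the Fourier coefficients $b_h$ and Lemma \ref{expo-pair-gernal}, as for $E_1,E_2$. Since the derivative of $hm^{1/\gamma}/d$ is about $hY^{1/\gamma-1}/d$, the exponent-pair bound gives terms like $D Y H^{-1}+D^{1/2}H^{1/2}Y^{1/(2\gamma)}$, which are admissible under $\xi+\frac{1}{2\gamma}<1$.

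For the main term, use $e(-h/d)-1\ll h/d$ to reduce to showing
\begin{equation*}
\sum_{d\sim D}\frac1d\sum_{h\leqslant H}\bigg|\sum_{m\in\mathscr{B},\,m\sim Y}e\bigg(\frac{hm^{1/\gamma}}{d}\bigg)\bigg|\ll x\mathscr{L}^{-A}.
\end{equation*}

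\textbf{Bilinear structure.} The characteristic function of $\mathscr{B}$ needs no Heath--Brown identity, because it is already a multilinear form. Every $m\in\mathscr{B}$ is $p_1\cdots p_8$ with each $p_i\geqslant x^{1/26.18}$, so each factor lies in $[x^{1/26.18},x^{1/8}]$ for the smallest ones. Grouping primes, one can always find a subproduct $M$ lying in any prescribed interval $[x^{\mathfrak b},x^{\mathfrak c}]$ of length exceeding $1/8$ in the exponent (greedy accumulation of prime factors each below $x^{1/2}$, in fact below the relevant size). The ordering condition $p_1<\dots<p_8$ and the dyadic constraints are then separated by Perron's formula or by standard removal of the cross-condition at a cost of $\mathscr{L}^{O(1)}$.

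This yields Type II sums $\sum_m\sum_\ell a(m)b(\ell)e(h(m\ell)^{1/\gamma}/d)$ with $|a|,|b|\leqslant1$. To these, apply the argument of Subsection \ref{subse-type-II} verbatim with $\gamma$ replaced by $1/\gamma$ in the phase: Cauchy's inequality, the $T$-slicing into classes $\mathscr{S}_t$, the spacing count of Lemma \ref{latticepoints} (whose proof via Fouvry--Iwaniec works for any exponent $\beta\neq0,1$), and the exponent pair $(\frac{11}{32},\frac{13}{24})$.

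\textbf{Main obstacle.} The hardest step is checking that the Type II range obtained with exponent $1/\gamma$ is wide enough, given $\xi$ as in (\ref{level-def}), to contain a subproduct of the eight primes. The phase now has size $HY^{1/\gamma}/D$, which is larger than before. However, since the $\psi$-difference has step $1/d$ rather than $(n+1)^\gamma-n^\gamma$, the effective weight is $h/d$ instead of $hX^{\gamma-1}/d$, and the numerology should come out essentially the same as in Lemma \ref{Type-II-es} after the substitution $X^\gamma\leftrightarrow Y$.

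If the window turns out to be too narrow, my first fallback is to exploit the flexibility of $\mathscr{B}$. With eight prime factors each at most $x^{1/2}$ and at least $x^{1/26.18}$, a window of exponent-width about $1/26.18$ already suffices, because the greedy product crosses any such window. That width is much smaller than the Type II range established for $\gamma>0.98$.
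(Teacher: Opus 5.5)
Your architecture matches the paper's. You expand the $\psi$'s, use positivity of $[m^{1/\gamma}/d]-[(m^{1/\gamma}-1)/d]$ in the short range, and reduce to the sum $\mathfrak{S}_0$. You then write $\mathds{1}_{\mathscr{B}}$ as bilinear forms and run the Type II argument with $\gamma$ replaced by $1/\gamma$. The gap is exactly the step you call the main obstacle, and the reasons you give for it are false.

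Greedy accumulation of prime factors avoids jumping over a window only if every factor is smaller than the window's width. $\mathscr{B}$ gives only \emph{lower} bounds $p_i\geqslant x^{1/26.18}$, and $p_8$ can be as large as $x^{1-7/26.18}\approx x^{0.733}$. So it is also not true that all eight factors are at most $x^{1/2}$. Take $p_1<\dots<p_7$ just above $x^{1/26.18}$ and $p_8\approx x/(p_1\cdots p_7)$. Then every subproduct has exponent in $[0,0.27]\cup[0.73,1]$. Hence the windows $[x^{0.3},x^{0.45}]$ and $[x^{0.55},x^{0.7}]$, of width $0.15>1/8$ (let alone $1/26.18$), contain no subproduct. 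Whether $\mathds{1}_{\mathscr{B}}$ splits into admissible Type II forms therefore depends on where the Type II window sits, which you never determine.

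Nor does Lemma~\ref{Type-II-es} transfer verbatim. Here $J\leqslant H\approx x^{\xi}$ rather than $X^{\gamma\xi+1-\gamma}$, there is no prefactor $X^{\gamma-1}$, and the phase has size $JX^{1/\gamma}/D$. So the optimization in $T$ must be redone.

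The paper does this with the pair $(\frac12,\frac12)$. It obtains (\ref{S_0-upper-1}) and the admissible range (\ref{S_0-upp-condition}), which contains $[X^{0.609964},X^{0.886021}]$. It then runs a case analysis on the ordering $p_1<\dots<p_8$, testing a single $p_i$, then $p_5\cdots p_8$, $p_3\cdots p_8$, $p_1p_2p_5\cdots p_8$ and $p_4\cdots p_8$. This shows that every $m\in\mathscr{B}$ with $m\sim X>x^{1-\eta}$ has a subproduct in that window.

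Your route can be completed. For example, (\ref{S_0-upper-1}) is valid for every $T\geqslant1$. With $T=1$ all its terms are $\ll X^{2-2\eta}$ when $X^{1/(2\gamma)+2\xi+\eta}\ll M\ll X^{1-\eta}$. That range contains $M=m/p_1$, since $x^{1/26.18}\leqslant p_1<x^{1/8}$. But some such explicit Type II computation, together with a correct covering argument for $\mathscr{B}$, must be supplied. As written, your proof is missing its decisive step.
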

\begin{proof}
From the definition of $\mathscr{R}_d$ and splitting argument, it is sufficient to show that, for $X\leqslant x$,
there holds
\begin{equation}\label{error-splitting-1}
\sum_{d\sim D}\Bigg|\sum_{\substack{m\in\mathscr{B}\\ m\sim X}}
\bigg(\psi\bigg(\frac{m^{1/\gamma}-1}{d}\bigg)-\psi\bigg(\frac{m^{1/\gamma}}{d}\bigg)\bigg)\Bigg|
\ll\frac{x}{(\log x)^A},
\end{equation}
where $D=x^\xi$. Let $\mathds{1}_{\mathscr{B}}(\cdot)$ be the characteristic function supporting on $\mathscr{B}$. Then (\ref{error-splitting-1}) can be rewritten as
\begin{equation}\label{error-splitting-2}
\sum_{d\sim D}\Bigg|\sum_{ m\sim X}\mathds{1}_{\mathscr{B}}(m)
\bigg(\psi\bigg(\frac{m^{1/\gamma}-1}{d}\bigg)-\psi\bigg(\frac{m^{1/\gamma}}{d}\bigg)\bigg)\Bigg|
\ll\frac{x}{(\log x)^A}.
\end{equation}
If $X\leqslant x^{(1-\eta)}$, then the left--hand side of (\ref{error-splitting-2}) is
\begin{align}\label{8-error-f-1}
 = & \,\, \sum_{d\sim D}\Bigg|\sum_{m\sim X}\mathds{1}_{\mathscr{B}}(m)\bigg(-\frac{1}{d}
          +\bigg[\frac{m^{1/\gamma}}{d}\bigg]-\bigg[\frac{m^{1/\gamma}-1}{d}\bigg]\bigg)\Bigg|
			     \nonumber \\
\leqslant & \,\, \bigg(\sum_{d\sim D}\frac{X}{d}\bigg)+\sum_{d\sim D}\sum_{m\sim X}
			     \bigg(\bigg[\frac{m^{1/\gamma}}{d}\bigg]-\bigg[\frac{m^{1/\gamma}-1}{d}\bigg]\bigg)
			     \nonumber \\
= & \,\, 2\bigg(\sum_{d\sim D}\frac{X}{d}\bigg)+\sum_{d\sim D}\sum_{m\sim X}
			\bigg(\psi\bigg(\frac{m^{1/\gamma}-1}{d}\bigg)-\psi\bigg(\frac{m^{1/\gamma}}{d}\bigg)\bigg)
			     \nonumber \\
\ll & \,\, x^{1-\eta}+\sum_{d\sim D}\sum_{m\sim X}\bigg(\psi\bigg(\frac{m^{1/\gamma}-1}{d}\bigg)
           -\psi\bigg(\frac{m^{1/\gamma}}{d}\bigg)\bigg).
\end{align}
For the sum on the right--hand side of (\ref{8-error-f-1}), by Lemma \ref{psi-expansion} with $H=H_*:=x^{\xi-\eta}$ in (\ref{psi-expan}), we have
\begin{align*}
  & \,\, \sum_{d\sim D}\sum_{m\sim X}\bigg(\psi\bigg(\frac{m^{1/\gamma}-1}{d}\bigg)
         -\psi\bigg(\frac{m^{1/\gamma}}{d}\bigg)\bigg)
			       \nonumber \\
= & \,\, \sum_{d\sim D}\sum_{m\sim X}\Bigg(\sum_{0<|h|\leqslant H_*}\frac{1}{2\pi ih}
		 \bigg(e\bigg(\frac{hm^{1/\gamma}}{d}\bigg)-e\bigg(\frac{h(m^{1/\gamma}-1)}{d}\bigg)\bigg)
			       \nonumber \\
  & \,\, \hspace{5em} +O\bigg(g\bigg(\frac{m^{1/\gamma}}{d},H_*\bigg)\bigg)
		 +O\bigg(g\bigg(\frac{m^{1/\gamma}-1}{d},H_*\bigg)\bigg)\Bigg)
			       \nonumber \\
=: & \,\, \Theta_1+\Theta_2+\Theta_3,
\end{align*}
say. It suffices to deal with $\Theta_2$, since $\Theta_3$ can be treated exactly the same. By Lemma \ref{expo-pair-gernal} with $(\kappa,\ell)=(\frac{1}{2},\frac{1}{2})$, we have
\begin{align}\label{theta-2-deal}
			\Theta_2
 \ll & \,\, \sum_{d\sim D}\sum_{h=-\infty}^\infty|b_h|
			\Bigg|\sum_{m\sim X}e\bigg(\frac{hm^{1/\gamma}}{d}\bigg)\Bigg|
			      \nonumber \\
 \ll & \,\, \mathscr{L}XDH_*^{-1}+\sum_{d\sim D}\sum_{\substack{h=-\infty\\ h\not=0}}^\infty
			|b_h|\Bigg|\sum_{m\sim X}e\bigg(\frac{hm^{1/\gamma}}{d}\bigg)\Bigg|
			      \nonumber \\
 \ll & \,\, \mathscr{L}XDH_*^{-1}+\sum_{d\sim D}\sum_{\substack{h=-\infty\\ h\not=0}}^\infty
			|b_h|\bigg(\frac{d}{|h|}X^{1-{1/\gamma}}+\bigg(\frac{|h|}{d}\bigg)^{1/2}X^{1/(2\gamma)}\bigg)
			      \nonumber \\
 \ll & \,\, \mathscr{L}XDH_*^{-1}+\sum_{d\sim D}\Bigg(\sum_{0<|h|\leqslant H_*}\frac{1}{|h|}
			\bigg(\frac{d}{|h|}X^{1-{1/\gamma}}+\bigg(\frac{|h|}{d}\bigg)^{1/2}X^{1/(2\gamma)}\bigg)
			      \nonumber \\
	 & \hspace{9em} +\sum_{|h|>H_*}\frac{H_*}{h^2}
			\bigg(\frac{d}{|h|}X^{1-{1/\gamma}}+\bigg(\frac{|h|}{d}\bigg)^{1/2}X^{1/(2\gamma)}\bigg)\Bigg)
			      \nonumber \\
 \ll & \,\, \mathscr{L}XDH_*^{-1}+D^2X^{1-{1/\gamma}}+D^{1/2}H_*^{1/2}X^{1/(2\gamma)}
			      \nonumber \\
 \ll & \,\, \frac{x}{(\log x)^{A}},
\end{align}
provided that
\begin{equation*}
\xi + \frac{1}{2\gamma}<1.
\end{equation*}
For $\Theta_1$, by Lemma \ref{expo-pair-gernal} with $(\kappa,\ell)=(\frac{1}{2},\frac{1}{2})$, we get
\begin{align*}
			\Theta_1
   = & \,\, \sum_{d\sim D}\sum_{0<|h|\leqslant H_*}\frac{1-e(-h/d)}{2\pi ih}\sum_{m\sim X}
			e\bigg(\frac{hm^{1/\gamma}}{d}\bigg)
			       \nonumber \\
 \ll & \,\, \sum_{d\sim D}\sum_{0<|h|\leqslant H_*}\frac{1}{d}
			\bigg(\frac{d}{|h|}X^{1-{1/\gamma}}+\bigg(\frac{|h|}{d}\bigg)^{1/2}X^{1/(2\gamma)}\bigg)
			       \nonumber \\
 \ll & \,\, D\mathscr{L}X^{1-1/\gamma}+H_*^{3/2}X^{1/(2\gamma)}
			       \nonumber \\
 \ll & \,\, x^{1+\xi-1/\gamma}+x^{3\xi/2+1/(2\gamma)}
			       \nonumber \\
 \ll & \,\, x^{1-\eta},
\end{align*}
provided that
\begin{equation*}
\frac{3\xi}{2}+\frac{1}{2\gamma}<1.
\end{equation*}
Now, we assume that $x^{1-\eta}<X\leqslant x$, by (\ref{psi-expan}) we know that the total contribution of the error term in (\ref{psi-expan}) to the left--hand side of (\ref{error-splitting-2}) is
\begin{equation*}
\ll \sum_{d\sim D}\sum_{m\sim X}\bigg(g\bigg(\frac{m^{1/\gamma}-1}{d},H\bigg)+
g\bigg(\frac{m^{1/\gamma}}{d},H\bigg)\bigg)=:\mathfrak{S}_1^*+\mathfrak{S}_2^*.
\end{equation*}
One can show that $\mathfrak{S}_1^*,\mathfrak{S}_2^* \ll x\mathscr{L}^{-A}$ by following the same processes as those in (\ref{theta-2-deal}) under the conditions
\begin{equation*}
H=x^{\xi-\eta},\qquad \xi+\frac{1}{2\gamma}<1.
\end{equation*}
The contribution of the main term in (\ref{psi-expan}) to the left--hand side of (\ref{error-splitting-2}) is
\begin{align*}
 = & \,\, \sum_{d\sim D}\Bigg|\sum_{m\sim X}\mathds{1}_{\mathscr{B}}(m)\sum_{0<|h|\leqslant H}\frac{1}{2\pi ih}
		  \bigg(e\bigg(\frac{hm^{1/\gamma}}{d}\bigg)-e\bigg(\frac{h(m^{1/\gamma}-1)}{d}\bigg)\bigg)\Bigg|
			     \nonumber \\
\ll & \,\,\sum_{d\sim D}\sum_{0<h\leqslant H}\frac{1}{h}\Bigg|\sum_{m\sim X}\mathds{1}_{\mathscr{B}}(m)
		  \bigg(e\bigg(\frac{hm^{1/\gamma}}{d}\bigg)-e\bigg(\frac{h(m^{1/\gamma}-1)}{d}\bigg)\bigg)\Bigg|
		  =:\mathfrak{S},
\end{align*}
say. Consequently, it suffices to show that $\mathfrak{S}\ll x\mathscr{L}^{-A}$. It follows that
\begin{align*}
		 \mathfrak{S}
= & \,\, \sum_{d\sim D}\sum_{0<h\leqslant H}\frac{|1-e(-h/d)|}{h}\Bigg|\sum_{m\sim X}\mathds{1}_{\mathscr{B}}(m)
		 e\bigg(\frac{hm^{1/\gamma}}{d}\bigg) \Bigg|
			    \nonumber \\
\ll & \,\, \sum_{d\sim D}\frac{1}{d}\sum_{0<h\leqslant H}\Bigg|\sum_{m\sim X}
		   \mathds{1}_{\mathscr{B}}(m)e\bigg(\frac{hm^{1/\gamma}}{d}\bigg)\Bigg|,
\end{align*}
Therefore, we obtain
\begin{align}\label{error-splitting-3}
	 & \,\,  \sum_{d\sim D}\Bigg|\sum_{m\sim X}\mathds{1}_{\mathscr{B}}(m)
			 \bigg(\psi\bigg(\frac{m^{1/\gamma}-1}{d}\bigg)-\psi\bigg(\frac{m^{1/\gamma}}{d}\bigg)\bigg)\Bigg|
			        \nonumber \\
 \ll & \,\, x\mathscr{L}^{-A}+\max_{\substack{x^{1-\eta}<X\leqslant x}}
            \sum_{d\sim D}\frac{1}{d}\sum_{0<h\leqslant H}\Bigg|\sum_{m\sim X}
			\mathds{1}_{\mathscr{B}}(m)e\bigg(\frac{hm^{1/\gamma}}{d}\bigg)\Bigg|
			        \nonumber \\
 \ll & \,\, x\mathscr{L}^{-A}+\max_{\substack{x^{1-\eta}<X\leqslant x}}
			\sum_{d\sim D}\frac{1}{d}\sum_{0<h\leqslant H}\delta^*(d,h)\sum_{m\sim X}
			\mathds{1}_{\mathscr{B}}(m)e\bigg(\frac{hm^{1/\gamma}}{d}\bigg)
			        \nonumber \\
 \ll & \,\, x\mathscr{L}^{-A}+\max_{\substack{x^{1-\eta}<X\leqslant x}}
			\Bigg|\sum_{\substack{m\in\mathscr{B}\\ m\sim X}}\sum_{0<h\leqslant H}\sum_{d\sim D}
			\frac{\delta^*(d,h)}{d}e\bigg(\frac{hm^{1/\gamma}}{d}\bigg)\Bigg|,
\end{align}
where
\begin{equation*}
\big|\delta^*(d,h)\big|=1.
\end{equation*}
Denote by $\mathfrak{S}_0$ the triple summation in the absolute value on the right--hand side in (\ref{error-splitting-3}). In order to prove (\ref{error-splitting-2}), it is sufficient to show that
$\mathfrak{S}_0\ll X^{1-\eta}$. By dividing $m$ into two variables and following the same processes, except for replacing $\gamma$ with $1/\gamma$ and replacing the exponent pair $(\frac{11}{32},\frac{13}{24})$ with $(\frac{1}{2},\frac{1}{2})$, as demonstrated in Type II sum estimate, we deduce that (using the same notation as in (\ref{Sigma-2-fi-1}))
\begin{align}\label{S_0-upper-1}
	       |\mathfrak{S}_0|^2
\ll & \,\, X^{\eta}\big(X^{2-1/\gamma}TJD+XMTJD^{-1}+X^{1/(2\gamma)+2}M^{-1}J^{5/2}D^{-1/2}T^{-1/2}
			       \nonumber \\
& \,\, \qquad+X^{1/(2\gamma)}J^{3/2}D^{-3/2}T^{1/2}\big).
\end{align}
Now, we choose
\begin{equation}\label{T-chosen-S_0}
T=\big[X^{(1+2\gamma)/(3\gamma)}M^{-4/3}D^{4/3}\big].
\end{equation}
Trivially, one has $T>1$ provided that $M\ll X^{1/(4\gamma)+\xi+1/2-\eta}$. Inserting (\ref{T-chosen-S_0}) into (\ref{S_0-upper-1}),  we obtain
\begin{align}\label{S_0-upper-2}
		   |\mathfrak{S}_0|^2
\ll & \,\, X^{\eta}\big(M^{-4/3}X^{(-2+8\gamma)/(3\gamma)}JD^{7/3}+M^{-1/3}X^{(1+5\gamma)/(3\gamma)}JD^{1/3}
			       \nonumber \\
    & \,\, \qquad +M^{-2/3}X^{(2+\gamma)/(3\gamma)}J^{3/2}D^{-5/6}\big),
\end{align}
which combined with $J\ll H=X^{\xi-\eta}$ and $X^{\xi}\leqslant D\leqslant X^{\xi+\eta/2}$ yields that
\begin{align*}
			|\mathfrak{S}_0|^2
 \ll & \,\, X^{\eta}\big(M^{-4/3}X^{(-2+8\gamma+10\gamma\xi)/(3\gamma)}
            +M^{-1/3}X^{(1+5\gamma+4\gamma\xi)/(3\gamma)}+M^{-2/3}X^{(2+\gamma+2\gamma\xi)/(3\gamma)}\big).
\end{align*}
Accordingly, one derives that $\mathfrak{S}_0\ll X^{1-\eta}$, provided that
\begin{equation}\label{S_0-upp-condition}
X^{1/\gamma-1+4\xi+\eta}\ll M\ll X^{1/(4\gamma)+\xi+1/2-\eta},
\end{equation}
and $2\xi + 1/(2\gamma) < 1$. By (\ref{level-def}), we get $\mathfrak{S}_0\ll X^{1-\eta}$ under the condition (\ref{S_0-upp-condition}). In particular, for $0.98352<\gamma<1$ and the definition of $\xi$, i.e., (\ref{level-def}), it is easy to see that
\begin{equation*}
X^{1/\gamma-1+4\xi+\eta}\leqslant X^{\alpha_0}<X^{\beta_0}\leqslant X^{1/(4\gamma)+\xi+1/2-\eta},
\end{equation*}
where
\begin{equation*}
\alpha_0=0.609964,\qquad \beta_0=0.886021.
\end{equation*}
Next, we shall illustrate that, for $m=p_1p_2\dots p_8\in\mathscr{B}$ with $m\sim X>x^{1-\eta}$, there must be some partial product of $p_1p_2\dots p_8$ which lies in the interval $[X^{\alpha_0},X^{\beta_0}]$.
		
First, since $p_i\geqslant x^{1/26.18}$ and $p_1p_2\dots p_8\in[x^{1-\eta},x]$, we have
$p_i\leqslant X^{\beta_0}\,(i=1,2,\dots,8)$. Otherwise, if there exists some $p_i>X^{\beta_0}$, then
$m\geqslant X^{\beta_0}\cdot x^{7/26.18}>x^{0.886021+7/26.18}>x^{1.153}$, which is a contradiction. If there exists
some $p_i\in[X^{\alpha_0},X^{\beta_0}]$, then the conclusion follows. If this case does not exist, we consider the
product $p_5p_6p_7p_8$. We claim that there must be $p_5p_6p_7p_8\leqslant X^{\beta_0}$. Otherwise, we get
\begin{equation*}
X^{\beta_0}<p_5p_6p_7p_8<X(p_1p_2p_3p_4)^{-1}<x^{1-4/26.18}<x^{0.8473}<X^{\beta_0},
\end{equation*}
which is a contradiction. If $p_5p_6p_7p_8\in[X^{\alpha_0},X^{\beta_0}]$, then the conclusion follows. If this case does not exist, i.e., $p_5p_6p_7p_8<X^{\alpha_0}$, we consider the product $p_3p_4p_5p_6p_7p_8$. Now, we claim that there must hold $p_3p_4p_5p_6p_7p_8\geqslant X^{\alpha_0}$. Otherwise, we obtain
\begin{equation*}
X\ll(p_1p_2)(p_3p_4p_5p_6p_7p_8)<(p_3p_4p_5p_6p_7p_8)^{4/3}<X^{4\alpha_0/3}<X^{0.82},
\end{equation*}
which is a contradiction. If $p_3p_4p_5p_6p_7p_8\leqslant X^{\beta_0}$, then the conclusion follows. If this case does not exist, i.e., $p_3p_4p_5p_6p_7p_8> X^{\beta_0}$, we have (under the condition $p_5p_6p_7p_8<X^{\alpha_0}$)
\begin{equation}\label{1234-condition}
X^{1-\alpha_0-\eta}<(X/2)(p_5p_6p_7p_8)^{-1}<p_1p_2p_3p_4<(p_1p_2p_3p_4p_5p_6p_7p_8)^{1/2}<X^{1/2},
\end{equation}
which implies that $p_3p_4>X^{(1-\alpha_0)/2-\eta}$. By noting that
\begin{equation*}
p_3p_4<p_5p_6<(p_5p_6p_7p_8)^{1/2}<X^{\alpha_0/2},
\end{equation*}
we get $X^{(1-\alpha_0)/2-\eta}<p_3p_4<X^{\alpha_0/2}$. If $X^{1-\beta_0}\leqslant p_3p_4<X^{\alpha_0/2}$, then
we have
\begin{equation*}
X^{\alpha_0}<X^{1-\alpha_0/2-\eta}<p_1p_2p_5p_6p_7p_8\leqslant X^{\beta_0}.
\end{equation*}
If $X^{(1-\alpha_0)/2-\eta}<p_3p_4<X^{1-\beta_0}$, we have $p_3<(p_3p_4)^{1/2}<X^{(1-\beta_0)/2}$, and thus
\begin{equation*}
p_4p_5p_6p_7p_8>X^{\beta_0}p_3^{-1}>X^{\beta_0-(1-\beta_0)/2}>X^{\alpha_0}.
\end{equation*}
Moreover, there must hold $p_4p_5p_6p_7p_8\leqslant X^{\beta_0}$. Otherwise, if $p_4p_5p_6p_7p_8> X^{\beta_0}$, then
\begin{equation}\label{123-condition}
p_1p_2p_3<X(p_4p_5p_6p_7p_8)^{-1}<X^{1-\beta_0}.
\end{equation}
From (\ref{1234-condition}) and (\ref{123-condition}), we deduce that
\begin{equation*}
X^{\beta_0-\alpha_0-\eta}<X^{1-\alpha_0-\eta}(p_3p_4)^{-1}<p_1p_2<(p_1p_2p_3)^{2/3}<X^{2(1-\beta_0)/3},
\end{equation*}
which is in contradiction to the fact that
\begin{equation*}
\beta_0-\alpha_0-\eta>\frac{2}{3}(1-\beta_0).
\end{equation*}
This completes the proof of Lemma \ref{omega=8-error}.
\end{proof}
From Lemma \ref{upper-lower-sieve} and Lemma \ref{omega=8-error}, we deduce that
\begin{equation}\label{S(E)-upper}
S\big(\mathscr{E},x^{1/(2\gamma)}\big)\leqslant \mathcal{X}V\big(x^{1/(2\gamma)}\big)\bigg(F\big(2\gamma\xi\big)+o(1)\bigg).
\end{equation}
According to (\ref{V(z)-def}), we get
\begin{equation*}
V\big(x^{1/(2\gamma)}\big)=\frac{2\gamma}{26.18}V\big(x^{1/26.18}\big)\big(1+O(\log x)^{-1}\big),
\end{equation*}
which combined with (\ref{S(E)-upper}) yields
\begin{equation}\label{S(E)-upper-1}
S(\mathscr{E},x^{1/(2\gamma)})\leqslant \frac{2e^{C_0}}{26.18\xi}\mathcal{X}V\big(x^{1/26.18}\big)(1+o(1)).
\end{equation}
Next, we compute the quantity $\mathcal{X}$ definitely. Obviously, we have
\begin{align}\label{X-first-num}
		  \mathcal{X}=&\sum_{m\in\mathscr{B}}1
		          \nonumber \\
 = & \,\, \sum_{x^{1/26.18}\leqslant p_1< x^{1/8}}\sum_{p_1<p_2<(x/p_1)^{1/7}}
		  \sum_{p_2<p_3<(x/(p_1p_2))^{1/6}}\sum_{p_3<p_4<(x/p_1p_2p_3)^{1/5}}
		  \sum_{p_4<p_5<(x/p_1p_2p_3p_4)^{1/4}}
		          \nonumber \\
   & \,\, \times\sum_{p_5<p_6<(x/p_1p_2p_3p_4p_5)^{1/3}}\sum_{p_6<p_7<(x/p_1p_2p_3p_4p_5p_6)^{1/2}}
		  \sum_{p_7<p_8<(x/p_1p_2p_3p_4p_5p_6p_7)}1
		          \nonumber \\
 = & \,\, \int_{x^{1/26.18}}^{x^{1/8}}\int_{u_1}^{(\frac{x}{u_1})^{1/7}}
		  \int_{u_2}^{(\frac{x}{u_1u_2})^{1/6}}\int_{u_3}^{(\frac{x}{u_1u_2u_3})^{1/5}}
		  \int_{u_4}^{(\frac{x}{u_1u_2u_3u_4})^{1/4}}\int_{u_5}^{(\frac{x}{u_1u_2u_3u_4u_5})^{1/3}}
		          \nonumber \\
   & \,\, \times\int_{u_6}^{(\frac{x}{u_1u_2u_3u_4u_5u_6})^{1/2}}
		  \int_{u_7}^{\frac{x}{u_1u_2u_3u_4u_5u_6u_7}}
		  \frac{\mathrm{d}u_8\mathrm{d}u_7\dots\mathrm{d}u_2\mathrm{d}u_1}{(\log u_1)(\log u_2)\dots(\log u_8)}
		          \nonumber \\
 = & \,\, \int_{\frac{1}{26.18}}^{\frac{1}{8}}\frac{\mathrm{d}t_1}{t_1}
		  \int_{t_1}^{\frac{1-t_1}{7}}\frac{\mathrm{d}t_2}{t_2}
		  \int_{t_2}^{\frac{1-t_1-t_2}{6}}\frac{\mathrm{d}t_3}{t_3}
		  \int_{t_3}^{\frac{1-t_1-t_2-t_3}{5}}\frac{\mathrm{d}t_4}{t_4}
		  \int_{t_4}^{\frac{1-t_1-t_2-t_3-t_4}{4}}\frac{\mathrm{d}t_5}{t_5}
		          \nonumber \\
   & \,\, \times\int_{t_5}^{\frac{1-t_1-t_2-t_3-t_4-t_5}{3}}\frac{\mathrm{d}t_6}{t_6}
		  \int_{t_6}^{\frac{1-t_1-t_2-t_3-t_4-t_5-t_6}{2}}\frac{\mathrm{d}t_7}{t_7}
		  \int_{t_7}^{1-t_1-t_2-t_3-t_4-t_5-t_6-t_7}
		  \frac{x^{t_1+t_2+\dots+t_8}}{t_8}\mathrm{d}t_8.
\end{align}
For the innermost integral in (\ref{X-first-num}), we have
\begin{align}\label{inner-expli}
   & \,\, \int_{t_7}^{1-t_1-t_2-t_3-t_4-t_5-t_6-t_7}\frac{x^{t_1+t_2+\dots+t_8}}{t_8}\mathrm{d}t_8
		          \nonumber \\
 = & \,\, \frac{1}{\log x}\int_{t_7}^{1-t_1-t_2-t_3-t_4-t_5-t_6-t_7}\frac{1}{t_8}\mathrm{d}x^{t_1+t_2+\dots+t_8}
		          \nonumber \\
 = & \,\, \frac{1}{\log x}\bigg(\frac{x}{1-t_1-t_2-t_3-t_4-t_5-t_6-t_7}+O\bigg(\frac{x}{\log x}\bigg)\bigg)
		          \nonumber \\
 = & \,\, \frac{1}{1-t_1-t_2-t_3-t_4-t_5-t_6-t_7}\cdot\frac{x}{\log x}(1+o(1)).
\end{align}
From (\ref{X-first-num}), (\ref{inner-expli}), we deduce that
\begin{align}\label{X-compu-num}
		  \mathcal{X}
 = & \,\, \frac{x(1+o(1))}{\log x}\int_{\frac{1}{26.18}}^{\frac{1}{8}}\frac{\mathrm{d}t_1}{t_1}
		  \int_{t_1}^{\frac{1-t_1}{7}}\frac{\mathrm{d}t_2}{t_2}
		  \int_{t_2}^{\frac{1-t_1-t_2}{6}}\frac{\mathrm{d}t_3}{t_3}
		  \int_{t_3}^{\frac{1-t_1-t_2-t_3}{5}}\frac{\mathrm{d}t_4}{t_4}
		  \int_{t_4}^{\frac{1-t_1-t_2-t_3-t_4}{4}}\frac{\mathrm{d}t_5}{t_5}
		              \nonumber \\
   & \,\, \times\int_{t_5}^{\frac{1-t_1-t_2-t_3-t_4-t_5}{3}}\frac{\mathrm{d}t_6}{t_6}
		  \int_{t_6}^{\frac{1-t_1-t_2-t_3-t_4-t_5-t_6}{2}}
		  \frac{\mathrm{d}t_7}{t_7(1-t_1-t_2-t_3-t_4-t_5-t_6-t_7)}.
\end{align}
Combining (\ref{omega(a)<7-lower}), (\ref{W(a,1/26.18)-lower}), (\ref{E-trans-upper}), (\ref{S(E)-upper-1}) and (\ref{X-compu-num}), we obtain
\begin{align*}
		         \sum_{\substack{a\in\mathscr{A}\\(a,P(x^{1/26.18}))=1\\ \Omega(a)\leqslant7}}\mathscr{W}_a
\geqslant & \,\, \frac{2e^{C_0}}{26.18}\frac{\gamma x}{\log x}V\big(x^{1/26.18}\big)(1+o(1))
		         \Bigg(\frac{\log\big(26.18\xi-1\big)}{\xi}-\lambda\int_u^{26.18}\frac{t-u}{t(t\xi-1)}\mathrm{d}t
		               \nonumber \\
	& \,\, -\frac{\lambda}{\xi \gamma}\int_{\frac{1}{26.18}}^{\frac{1}{8}}\frac{\mathrm{d}t_1}{t_1}
		   \int_{t_1}^{\frac{1-t_1}{7}}\frac{\mathrm{d}t_2}{t_2}
		   \int_{t_2}^{\frac{1-t_1-t_2}{6}}\frac{\mathrm{d}t_3}{t_3}
		   \int_{t_3}^{\frac{1-t_1-t_2-t_3}{5}}\frac{\mathrm{d}t_4}{t_4}
		   \int_{t_4}^{\frac{1-t_1-t_2-t_3-t_4}{4}}\frac{\mathrm{d}t_5}{t_5}
		               \nonumber \\
	 & \,\, \times\int_{t_5}^{\frac{1-t_1-t_2-t_3-t_4-t_5}{3}}\frac{\mathrm{d}t_6}{t_6}
		    \int_{t_6}^{\frac{1-t_1-t_2-t_3-t_4-t_5-t_6}{2}}
		    \frac{\mathrm{d}t_7}{t_7(1-\sum_{j=1}^7t_j)}\Bigg)+O\big(x^{\frac{25.18}{26.18}+\varepsilon}\big).
\end{align*}
By direct numerical calculations, it is easy to see that the number in the above brackets $(\cdot)$ is $\geqslant0.00167598$, provided that $0.98353<\gamma<1$. This completes the proof of Theorem \ref{Theorem-1}.

\section*{Acknowledgement}
	
\noindent
The authors would like to appreciate the referee for his/her patience in refereeing this paper. This work is supported by Beijing Natural Science Foundation (Grant No. 1242003), and the National Natural Science Foundation
of China (Grant Nos. 12471009, 12301006, 11901566, 12001047).
	
\subsection*{Author Contributions}
All authors contribute equally to this work. The manuscript is approved by all authors for publication.

\subsection*{Code Availability}
 Not applicable.

\subsection*{Availability of Data and Materials}
Data sharing not applicable to this article as no data sets were generated or analysed during the current study.

\subsection*{Conflict of Interest}

The authors declare no conflicts of interest. All procedures were in accordance with the ethical standards
of the institutional research committee and with the 1964 Helsinki declaration and its later amendments or comparable ethical standards.

\end{document}